\title{%
  \MakeUppercase{Optimization by Gradient Boosting}%
}
\author{%
  G\'erard~Biau%
  \thanks{\affil{Sorbonne Universit\'es, UPMC, CNRS, and Institut Universitaire de France, Paris, France},
          \email{gerard.biau@upmc.fr}}    
       \,
and Beno\^{\i}t Cadre%
  \thanks{\affil{IRMAR, ENS Rennes,
CNRS, UBL, France},
          \email{benoit.cadre@ens-rennes.fr}}
}
\newtheorem{rem}{Remark}[section]
\newtheorem{lem}{Lemma}[section]
\newtheorem{cor}{Corollary}[section]
\newtheorem{theo}{Theorem}[section]
\begin{document}
\maketitle

\begin{abstract}
Gradient boosting is a state-of-the-art prediction technique that sequentially produces a model in the form of linear combinations of simple predictors---typically decision trees---by solving an infinite-dimensional convex optimization problem. We provide in the present paper a thorough analysis of two widespread versions of gradient boosting, and introduce a general framework for studying these algorithms from the point of view of functional optimization. We prove their convergence as the number of iterations tends to infinity and highlight the importance of having a strongly convex risk functional to minimize. We  also present a reasonable statistical context ensuring consistency properties of the boosting predictors as the sample size grows. In our approach, the optimization procedures are run forever (that is, without resorting to an early stopping strategy), and statistical regularization is basically achieved via an appropriate $L^2$ penalization of the loss and strong convexity arguments.
\end{abstract}
\section{Introduction}
More than twenty years after the pioneering articles of Freund and Schapire \citep{Sc90,Fr95,FrSc96a,FrSc97}, boosting is still one of the most powerful ideas introduced in statistics and machine learning. Freund and Schapire's AdaBoost algorithm and its numerous descendants have proven to be competitive in a variety of applications, and are still able to provide state-of-the-art decisions in difficult real-life problems. In addition, boosting procedures are computationally fast and comfortable with both regression and classification problems.  For surveys of various aspects of boosting algorithms and related approaches, we refer to \citet{MeRa03}, \citet{BuHo07}, and to the monographs by \citet{HaTiFr09} and \citet{BuVd11}.

In a nutshell, the basic idea of boosting is to combine the outputs of many ``simple'' predictors, in order to produce a powerful committee with performances improved over the single members. Historically, the first formulations of Freund and Schapire considered boosting as an iterative classification algorithm that is run for a fixed number of iterations, and, at each iteration, selects one of the base classifiers, assigns a weight to it, and outputs the weighted majority vote of the chosen classifiers. Later on, \citet{Br97,Br98,Br99,Br00,Br04} made in a series of papers and technical reports the breakthrough observation that AdaBoost is in fact a gradient-descent-type algorithm in a function space, thereby identifying boosting at the frontier of numerical optimization and statistical estimation. This connection was further emphasized by \citet{FrHaTi00}, who rederived AdaBoost as a method for fitting an additive model in a forward stagewise manner. Following this, \citet{Fr01,Fr02} developed a general statistical framework (both for regression and classification) that $(i)$ yields a direct interpretation of boosting methods from the perspective of numerical optimization in a function space, and $(ii)$ generalizes them by allowing optimization of an arbitrary loss function. The term ``gradient boosting'' was coined by the author, who paid a special attention to the case where the individual additive components are decision trees. At the same time, \citet{MaBaBaFr99,MaBaBaFr00} embraced a more mathematical approach, revealing boosting as a principle to optimize a convex risk in a function space, by iteratively choosing a weak learner that approximately points in the negative gradient direction. 

This functional view of boosting has led to the development of algorithms in many areas of machine learning and computational statistics, beyond regression and classification. The history of boosting goes on today with algorithms such as XGBoost (Extreme Gradient Boosting, \citealp{ChGu16}), a tree boosting system widely recognized for its outstanding results in numerous data challenges. (An overview of its successes is given in the introductive section of the paper by \citealp{ChGu16}.) From a general point of view, XGBoost is but a scalable implementation of gradient boosting that contains various systems and algorithmic optimizations. Its mathematical principle is to perform a functional gradient-type descent in a space of decision trees, while regularizing the objective to avoid overfitting. 

However, despite a long list of successes, much work remains to be done to clarify the mathematical forces driving gradient boosting algorithms. Many influential articles regard boosting with a statistical eye and study the somewhat idealized problem of empirical risk minimization with a convex loss \citep[e.g.,][]{BlLuVa03,LuVa04}. These papers essentially concentrate on the statistical properties of the approach (that is, consistency and rates of convergence as the sample size grows) and often ignore the underlying optimization aspects. Other important articles, such as \citet{BuYu03,MaMeZh03,ZhYu05,BiRiZa06,BaTr07} take advantage of the iterative principle of boosting, but essentially focus on regularization via early stopping (that is, stopping the boosting iterations at some point), without paying too much attention to the optimization aspects. Notable exceptions are the pioneering notes of Breiman cited above, and the original paper by \citet{MaBaBaFr00}, who envision gradient boosting as an infinite-dimensional numerical optimization problem and pave the way for a more abstract analysis. All in all, there is to date no sound theory of gradient boosting in terms of numerical optimization. This state of affairs is a bit paradoxical, since optimization is certainly the most natural mathematical environment for gradient-descent-type algorithms. 

In line with the above, our main objective in this article is to provide a thorough analysis of two widespread models of gradient boosting, due to 
\citet{Fr01} and \citet{MaBaBaFr00}. We introduce in Section \ref{section2} a general framework for studying the algorithms from the point of view of functional optimization in an $L^2$ space, and prove in Section \ref{section3} their convergence  as the number of iterations tends to infinity. Our results allow for a large choice of convex losses in the optimization problem (differentiable or not), while highlighting the importance of having a strongly convex risk functional to minimize. This point is interesting, since it provides some theoretical justification for adding a penalty term to the objective, as advocated for example in the XGBoost system of \citealp{ChGu16}. Thus, the main message of Section \ref{section3} is that, under appropriate conditions, the sequence of boosted iterates converges towards the minimizer of the empirical risk functional over the set of linear combinations of weak learners. However, this does not guarantee that the output of the algorithms (i.e., the boosting predictor) enjoys good statistical properties, as overfitting may kick in. For this reason, we present in Section \ref{section4} a reasonable framework ensuring consistency properties of the boosting predictors as the sample size grows. In our approach, the optimization procedures are run forever (that is, without resorting to an early stopping strategy), and statistical regularization is basically achieved via an appropriate $L^2$ penalization of the loss and strong convexity arguments.

Before embarking on the analysis, we would like to stress that the present paper is theoretical in nature and that its main goal is to clarify/solidify some of the optimization ideas that are behind gradient boosting. In particular, we do not report experimental results, and refer to the specialized literature on (extreme) gradient boosting for discussions on the computational aspects and experiments with real-world data. 
\section{Gradient boosting}
\label{section2}
The purpose of this section is to describe the gradient boosting procedures that we analyze in the paper. 
\subsection{Mathematical context}
We assume to be given a sample $\mathscr D_n=\{(X_1,Y_1), \hdots, (X_n,Y_n)\}$ of i.i.d.~observations, where each pair $(X_i,Y_i)$ takes values in $\mathscr X \times \mathscr Y$. Throughout, $\mathscr X$ is a Borel subset of $\mathds R^d$, and $\mathscr Y\subset \mathds R$ is either a finite set of labels (for classification) or a subset of $\mathds R$ (for regression).  The vector space $\mathds R^d$ is endowed with the Euclidean norm $\|\cdot\|$. 

Our goal is to construct a predictor $F:\mathscr X\to \mathds R$ that assigns a response to each possible value of an independent random observation distributed as $X_1$.  In the context of gradient boosting, this general problem is addressed by considering a class $\mathscr F$ of functions $f:\mathscr X\to \mathds R$ (called the weak or base learners) and minimizing some empirical risk functional
\begin{equation*}
C_n(F)=\frac{1}{n}\sum_{i=1}^n\psi(F(X_i),Y_i)
\end{equation*}
over the linear combinations of functions in $\mathscr F$. The function $\psi:\mathds R \times \mathscr Y \to \mathds R_+$, called the loss, is convex in its first argument and measures the cost incurred by predicting $F(X_i)$ when the answer is $Y_i$. For example, in the least squares regression problem, $\psi(x,y)=(y-x)^2$ and
$$C_n(F)=\frac{1}{n}\sum_{i=1}^n(Y_i-F(X_i))^2.$$
However, many other examples are possible, as we will see below. Let $\delta_z$ denote the Dirac measure at $z$, and let $\mu_n=(1/n)\sum_{i=1}^n \delta_{(X_i,Y_i)}$ be the empirical measure associated with the sample $\mathscr D_n$. Clearly,
$$C_n(F)=\mathds E \psi(F(X),Y),$$
where $(X,Y)$ denotes a random pair with distribution $\mu_n$ and the symbol $\mathds E$ denotes the ex\-pec\-ta\-ti\-on with respect to $\mu_n$. Naturally, the theoretical (i.e., population) version of $C_n$ is 
\begin{equation*}
C(F)=\mathds E\psi(F(X_1),Y_1),
\end{equation*}
where now the expectation is taken with respect to the distribution of $(X_1,Y_1)$. It turns out that most of our subsequent developments are independent of the context, whether empirical or theoretical. Therefore, to unify the notation, we let throughout $(X,Y)$ be a generic pair of random variables with distribution $\mu_{X,Y}$, keeping in mind that $\mu_{X,Y}$ may be the distribution of $(X_1,Y_1)$ (theoretical risk), the standard empirical measure $\mu_n$ (empirical risk), or any smoothed version of $\mu_n$. 

We let $\mu_X$ be the distribution of $X$, $L^2(\mu_{X})$ the vector space of all measurable functions $f:\mathscr X\to \mathds R$ such that $\int |f|^2{\rm d}\mu_{X}<\infty$, and denote by $\langle \cdot,\cdot\rangle _{\mu_X}$ and $\|\cdot \|_{\mu_X}$ the corresponding norm and scalar product. Thus, for now, our problem is to minimize the quantity
$$
C(F)=\mathds E\psi(F(X),Y)
$$
over the linear combinations of functions in a given subset $\mathscr F$ of $L^2(\mu_X)$. A typical example for $\mathscr F$ is the collection of all binary decision trees in $\mathds R^d$ using axis parallel cuts with $k$ terminal nodes. In this case, each $fÊ\in \mathscr F$ takes the form $f=\sum_{j=1}^{k} \beta_j \mathds 1_{A_j}$, where 
$(\beta_1, \hdots, \beta_k) \in \mathds R^k$ and $A_1, \hdots, A_{k}$ is a tree-structured partition of $\mathds R^d$ \citep[][Chapter 20]{DeGyLu96}. 

As noted earlier, we assume that, for each $y \in \mathscr Y$, the function $\psi(\cdot, y)$ is convex. In the framework we have in mind, the function $\psi$ may take a variety of different forms, ranging from standard (regression or classification) losses to more involved penalized objectives. It may also be differentiable or not. Before discussing some examples in detail, we list assumptions that will be needed at some point. Throughout, we let $\xi(\cdot,y)$ be a subgradient of the convex function $\psi(\cdot, y)$, and recall that
 $\xi(x,y) \in [\partial_x^{-}\psi(x,y);\partial_x^{+}\psi(x,y)]$ (left and right partial derivatives).  In particular, for all $(x_1,x_2)\in \mathds R^2$,
\begin{equation}
\label{sub-zero}
\psi(x_1,y)\geq \psi(x_2,y)+\xi(x_2,y)(x_1-x_2).
\end{equation}

{\bf Assumption ${\bf A_1}$}
\begin{enumerate}
\item[${\bf A_1}$] One has $\mathds E \psi(0,Y)<\infty$. In addition, for all $F\in L^2(\mu_X)$, there exists $\delta>0$ such that
$$\sup_{G \in L^2(\mu_X):\|G-F\|_{\mu_X}\leq \delta}\big(\mathds E |\partial_x^{-}\psi(G(X),Y)|^2+\mathds E |\partial_x^{+}\psi(G(X),Y)|^2\big)<\infty.$$
\end{enumerate}

This assumption ensures that the convex functional $C$ is locally bounded (in particular, $C(F)<\infty$ for all $F\in L^2(\mu_X)$, and $C$ is continuous). Indeed, by inequality (\ref{sub-zero}), for all $G\in L^2(\mu_X)$,
\begin{align*}
\psi(G(x),y) &\leq \psi(0,y)+\xi(G(x),y)G(x).
\end{align*}
Therefore, using
$$|\xi(G(X),Y)|\leq |\partial_x^{-}\psi(G(X),Y)|+|\partial_x^{+}\psi(G(X),Y)|,$$
we have, by Assumption ${\bf A_1}$ and the Cauchy-Schwarz inequality,
$$\mathds E \psi(G(X),Y) \leq \mathds E\psi(0,Y)+\big(\mathds E\xi(G(X),Y)^2\mathds E G(X)^2\big)^{1/2},$$
so that $C$ is locally bounded. Naturally, Assumption ${\bf A_1}$ is automatically satisfied for the choice $\mu_{X,Y}=\mu_n$. 

{\bf Assumption ${\bf A_2}$}
\begin{enumerate}
\item[${\bf A_2}$] There exists $\alpha>0$ such that, for all $y\in \mathscr Y$, the function $\psi(\cdot,y)$ is $\alpha$-strongly convex, i.e., for all $(x_1,x_2)\in \mathds R^2$ and $t\in [0,1]$,
$$\psi(tx_1+(1-t)x_2,y)\leq t\psi(x_1,y)+(1-t)\psi(x_2,y)-\frac{\alpha}{2} t(1-t)(x_1-x_2)^2.$$
\end{enumerate}

This assumption will be used in most, but not all, results. Strong convexity will play an essential role in the statistical Section \ref{section4}. We note that, under Assumption ${\bf A_2}$, for all $(x_1,x_2)\in \mathds R^2$,
\begin{equation}
\label{sub-one}
\psi(x_1,y)\geq \psi(x_2,y)+\xi(x_2,y)(x_1-x_2)+\frac{\alpha}{2}(x_1-x_2)^2,
\end{equation}
which is of course an inequality tighter than (\ref{sub-zero}). Furthermore, the $\alpha$-strong convexity of $\psi(\cdot,y)$ implies the $\alpha$-strong convexity of the risk functional $C$ over $L^2(\mu_X)$. 

In addition to Assumptions ${\bf A_1}$ and ${\bf A_2}$, we require the following:
\begin{enumerate}
\item[${\bf A_3}$] There exists a positive constant $L$ such that, for all $(x_1,x_2) \in \mathds R^2$,
$$|\mathds E( \xi(x_1,Y)-\xi(x_2,Y)\,|\,X)| \leq L |x_1-x_2|.$$
\end{enumerate}

However esoteric this assumption may seem, it is in fact mild and provide a framework that encompasses a large variety of familiar situations. In particular, Assumption ${\bf A_3}$ admits a stronger version ${\bf A'_3}$, which is useful as soon as the function $\psi$ is continuously differentiable with respect to its first variable:

\begin{enumerate}
\item[${\bf A'_3}$] For all $y\in \mathscr Y$, the function $\psi(\cdot,y)$ is continuously differentiable, and there exists a positive constant $L$ such that, for all $(x_1,x_2,y) \in \mathds R^2\times \mathscr Y$,
$$|\partial _x \psi(x_1,y)-\partial _x \psi(x_2,y)| \leq L |x_1-x_2|.$$
\end{enumerate}

Assumption ${\bf A'_3}$ implies ${\bf A_3}$, but the converse is not true. To see this, just note that, in the smooth situation ${\bf A'_3}$, we have $\xi(x,y)=\partial_x \psi(x,y)$. Therefore,
$$\mathds E (\xi(x_1,Y)\,|\,X)=\int \partial_x \psi(x_1,Y)\mu_{Y|X}({\rm d}y),$$
where $\mu_{Y|X}$ is the conditional distribution of $Y$ given $X$. We also note that, in the context of ${\bf A'_3}$ , the functional $C$ is differentiable at any $F\in L^2(\mu_X)$ in the direction $G\in L^2(\mu_X)$, with differential
$$dC(F;G)=\langle\nabla C(F),G \rangle_{\mu_X},$$
where $\nabla C(F)(x):=\int \partial_x \psi(F(x),y)\mu_{Y|X=x}({\rm d}y)$ is the gradient of $C$ at $F$. However, Assumption ${\bf A_3}$ allows to deal with a larger variety of losses, including non-differentiable ones, as shown in the examples below.
\subsection{Some examples}
\begin{itemize}
\item  A first canonical example, in the regression setting, is to let $\psi(x,y)=(y-x)^2$ (squa\-red error loss), which is $2$-strongly convex in its first argument (Assumption $\bf A_2$) and satisfies Assumption ${\bf A_1}$ as soon as $\mathds E Y^2<\infty$. It also satisfies  ${\bf A'_3}$, with $\partial_x \psi(x,y)=2(x-y)$ and $L=2$.
\item Another example in regression is the loss $\psi(x,y)=|y-x|$ (absolute error loss), which is convex but not strongly convex in its first argument. Whenever strong convexity of the loss is required, a possible strategy is to regularize the objective via an $L^2$-type penalty, and take
$$\psi(x,y)=|y-x|+\gamma x^2,$$
where $\gamma$ is a positive parameter (possibly function of the sample size $n$ in the empirical setting). This loss is $(2\gamma)$-strongly convex in $x$ and satisfies ${\bf A_1}$ and ${\bf A_2}$ whenever $\mathds E |Y|<\infty$,  with $\xi(x,y)=\mbox{sgn}(x-y)+2\gamma x$ (with $\mbox{sgn}(u)=2\mathds 1_{[u> 0]}-1$ for $u\neq 0$ and $\mbox{sgn}(0)=0$). On the other hand, the function $\psi(\cdot,y)$ is not differentiable at $y$, so that the smoothness Assumption $\bf A'_3$ is not satisfied. However,
\begin{align*}
\mathds E( \xi(x_1,Y)-\xi(x_2,Y)\,|\,X) &=\int (\mbox{sgn}(x_1-y)-\mbox{sgn}(x_2-y))\mu_{Y|X}({\rm d}y)+2\gamma(x_1-x_2)\\
&=\mu_{Y|X}((-\infty,x_1))-\mu_{Y|X}((-\infty,x_2))+2\gamma(x_1-x_2)\\
& \quad -\mu_{Y|X}((x_1,\infty))+\mu_{Y|X}((x_2,\infty)).
\end{align*}
Thus, if we assume for example that $\mu_{Y|X}$ has a density (with respect to the Lebesgue measure) bounded by $B$, then
$$|\mathds E( \xi(x_1,Y)-\xi(x_2,Y)\,|\,X)| \leq 2(B+\gamma) |x_1-x_2|,$$
and Assumption ${\bf A_3}$ is therefore satisfied. Of course, in the empirical setting, assuming that $\mu_{Y|X}$ has a density precludes the use of the empirical measure $\mu_n$ for $\mu_{X,Y}$. A safe and simple alternative is to consider a smoothed version $\tilde \mu_n$ of $\mu_n$ \citep[based, for example, on a kernel estimate; see][]{DeGy85}, and to minimize the functional
$$C_n(F)=\int |y-F(x)|\tilde\mu_n({\rm d}x,{\rm d}y)+\gamma \int F(x)^2 \tilde\mu_n({\rm d}x)$$
over the linear combinations of functions in $\mathscr F$. 
\item In the $\pm1$-classification problem, the final classification rule is $+1$ if $F(x)>0$ and $-1$ otherwise. Often, the function $\psi(x,y)$ has the form $\phi (yx)$, where $\phi :\mathds R\to \mathds R_+$ is convex. Classical losses include the choices $\phi(u)=\ln_2(1+e^{-u})$ (logit loss), $\phi(u)=e^{-u}$ (exponential loss), and $\phi(u)=\max(1-u,0)$ (hinge loss). None of these losses is strongly convex, but here again, this can be repaired whenever needed by regularizing the problem via
\begin{equation}
\label{L+P}
\psi(x,y)=\phi(yx)+\gamma x^2,
\end{equation}
where $\gamma> 0$. It is for example easy to see that $\psi(x,y)=\ln_2(1+e^{-yx})+\gamma x^2$ satisfies Assumptions ${\bf A_1}$, ${\bf A_2}$, and ${\bf A'_3}$. This is also true for the penalized sigmoid loss $\psi(x,y)=(1-\tanh(\beta yx))+\gamma x^2$, where $\beta$ is a positive parameter. In this case, $\psi(\cdot,y)$ is $2(\gamma-\beta^2)$-strongly convex as soon as $\beta<\sqrt{\gamma}$. Another interesting example in the classification setting is the loss $\psi(x,y)=\phi(xy)+\gamma x^2$, where
\begin{equation*} 
\phi(u) = \left\{
\begin{array}{ll}
-u+1 & \mbox{if $u\leq 0$}\\
e^{-u} & \mbox{if $u> 0$.}
\end{array}
\right.
\end{equation*}
We leave it as an easy exercise to prove that Assumptions ${\bf A_1}$, ${\bf A_2}$, and ${\bf A'_3}$ are satisfied. Examples could be multiplied endlessly, but the point we wish to make is that our assumptions are mild and allow to consider a large variety of learning problems. We also emphasize that regularized objectives of the form (\ref{L+P}) are typically in action in the Extreme Gradient Boosting system of \citet{ChGu16}. 
\end{itemize}
\subsection{Two algorithms}
Let ${\mbox{lin}(\mathscr F)}$ be the set of all linear combinations of functions in $\mathscr F$, our collection of base predictors in $L^2(\mu_X)$. So, each $F \in {\mbox{lin}(\mathscr F)}$ has the form $F=\sum_{j=1}^J \beta_j f_j$, where $(\beta_1, \hdots, \beta_J) \in \mathds R^J$ and $f_1, \hdots, f_J$ are elements of $\mathscr F$. Finding the infimum of the functional $C$ over ${\mbox{lin}(\mathscr F)}$ is a challenging infinite-dimensi\-o\-nal optimization problem, which requires an algorithm. The core idea of the gradient boosting approach is to greedily locate the infimum by producing a combination of base predictors via a gradient-descent-type algorithm in $L^2(\mu_X)$. Focusing on the basics, this can be achieved by two related yet different strategies, which we examine in greater mathematical details below. \hyperref[algorithm1]{Algorithm 1} appears in \citet{MaBaBaFr00}, whereas \hyperref[algorithm2]{Algorithm 2} is essentially due to \citet{Fr01}. 

It is implicitly assumed throughout this paragraph that Assumption ${\bf A_1}$ is satisfied. We recall that under this assumption, the convex functional $C$ is locally bounded and therefore continuous. Thus, in particular,
$$\inf_{F \in \mbox{\footnotesize lin}(\mathscr F)} C(F)=\inf_{F \in \overline{\mbox{\footnotesize lin}(\mathscr F)}}C(F),$$
where $\overline{\mbox{lin}(\mathscr F)}$ is the closure of $\mbox{lin}(\mathscr F)$ in $L^2(\mu_X)$.
Loosely speaking, looking for the infimum of $C$ over $\overline{\mbox{lin}(\mathscr F)}$ is the same as looking for the infimum of $C$ over all (finite) linear combinations of ``small'' functions in $\mathscr F$. We note in addition that if Assumption ${\bf A_2}$ is satisfied, then there exists a unique function $\bar F \in \overline{\mbox{lin}(\mathscr F)}$ (which we call the boosting predictor) such that 
\begin{equation}
\label{Fbar}
C(\bar F)=\inf_{F \in {\mbox{\footnotesize lin}}(\mathscr F)}C(F).
\end{equation}
\paragraph{\hyperref[algorithm1]{Algorithm 1}.} In this approach, we consider a class $\mathscr F$ of functions $f:\mathscr X\to \mathds R$ such that $0 \in \mathscr F$, $f\in \mathscr F \Leftrightarrow -f \in \mathscr F$, and $\|f\|_{\mu_X}=1$ for $f\neq 0$. An example is the collection $\mathscr F$ of all $\pm 1$-binary trees in $\mathds R^d$ using axis parallel cuts with $k$ terminal nodes (plus zero). Each nonzero $fÊ\in \mathscr F$ takes the form $f=\sum_{j=1}^{k} \beta_j \mathds 1_{A_j}$, where 
$|\beta_j|=1$ and $A_1, \hdots, A_{k}$ is a tree-structured partition of $\mathds R^d$ \citep[][Chapter 20]{DeGyLu96}. 
The parameter $k$ is a measure of the tree complexity. For example, trees with $k=d+1$ are such that $\overline{\mbox{lin}(\mathscr F)}=L^2(\mu_X)$ \citep{Br00}. Thus, in this case,
$$\inf_{F \in {\mbox{\footnotesize lin}}(\mathscr F)}C(F)=\inf_{F \in L^2(\mu_X)} C(F).$$
Although interesting from the point of view of numerical optimization, this situation is however of little interest for statistical learning, as we will see in Section \ref{section4}.

Suppose now that we have a function $F \in \mbox{lin}(\mathscr F)$ and wish to find a new $f\in \mathscr F$ to add to $F$ so that the risk $C(F+wf)$ decreases at most, for some small value of $w$. Viewed in function space terms, we are looking for the direction $f \in \mathscr F$ such that $C(F+wf)$ most rapidly decreases. Assume for the moment, to simplify, that $\psi$ is continuously differentiable in its first argument. Then the knee-jerk reaction is to take the opposite of the gradient of $C$ at $F$, but since we are restricted to choosing our new function in $\mathscr F$, this will in general not be a possible choice. Thus, instead, we start from the approximate identity
\begin{equation}
\label{GD1}
C(F)-C(F+wf) \approx -w \langle \nabla C (F),f\rangle_{\mu_X}
\end{equation}
and choose $f \in \mathscr F$ that maximizes $-\langle \nabla C (F),f\rangle_{\mu_X}$. For an arbitrary (i.e., not necessarily differentiable) $\psi$, we simply replace the gradient by a subgradient and choose $f \in \mathscr F$  that maximizes $-\mathds E\xi(F(X),Y)f(X)$. This motivates the following iterative algorithm:
\begin{algorithm}
\caption{}
\label{algorithm1}
\begin{algorithmic}[1]
\STATE {\bf Require} $(w_{t})_{t}$ a sequence of positive real numbers.
\STATE {\bf Set} $t=0$ and start with $F_{0} \in \mathscr F$.
\STATE {\bf Compute}
\begin{equation}
\label{optimizationstep}
f_{t+1}\in {\arg \max}_{f \in \mathscr F} -\mathds E \xi (F_t(X),Y)f(X)
\end{equation}
and {\bf let} $F_{t+1}=F_{t}+w_{t+1} f_{t+1}$.
\STATE {\bf Take} $t\leftarrow t+1$ and {\bf go} to step 3.
\end{algorithmic}
\end{algorithm}

We emphasize that the method performs a gradient-type descent in the function space $L^2(\mu_X)$, by choosing at 
each iteration a base predictor to include in the combination so as to maximally reduce the value of the risk functional. However, the main difference with a standard gradient descent is that \hyperref[algorithm1]{Algorithm 1} forces the descent direction to belong to $\mathscr F$. To understand the rationale behind this principle, assume that $\psi$ is continuously differentiable in its first argument. As we have seen earlier, in this case,
$$-\mathds E \xi (F_t(X),Y)f(X) =-\langle \nabla C (F_t),f\rangle_{\mu_X},$$
and, for $\nabla C (F_t)\neq 0$, 
$$\frac{-\nabla C (F_t)}{\|\nabla C (F_t)\|_{\mu_X}}={\arg\max}_{F\in L^2(\mu_X):\|F\|_{\mu_X}=1}-\langle \nabla C (F_t),F\rangle_{\mu_X}.$$
Thus, at each step, \hyperref[algorithm1]{Algorithm 1} mimics the computation of the negative gradient by restricting the search of the supremum to the class $\mathscr F$, i.e., by taking
$$f_{t+1}\in {\arg\max}_{f\in \mathscr F}-\langle \nabla C (F_t),f\rangle_{\mu_X},$$
which is exactly (\ref{optimizationstep}). In the empirical case (i.e., $\mu_{X,Y}=\mu_n$) this descent step takes the form
$$f_{t+1}\in {\arg \max}_{f \in \mathscr F} -\frac{1}{n}\sum_{i=1}^n \nabla C (F_t)(X_i)\cdot f(X_i).$$
Finding this optimum is a non-trivial computational problem, which necessitates a strategy. For example, in the spirit of the CART algorithm of \citet{BrFrOlSt84}, \citet{ChGu16} use in the XGBoost package a greedy approach that starts from a single leaf and iteratively adds branches to the tree.

The sequence $(w_t)_{t}$ is the sequence of
step sizes, which are allowed to change at every iteration and should be carefully chosen for convergence
guarantees. It is also stressed that the algorithm is assumed to be run forever, i.e., 
stopping or not the iterations is not an issue at this stage of the analysis. As we will see in the next section, the algorithm is convergent under our assumptions (with an appropriate choice of the sequence $(w_t)_{t}$), in the sense that 
$$\lim_{t\to \infty} C(F_t)=\inf_{F\in \mbox{\footnotesize lin}(\mathscr F)}C(F).$$
Of course, in the empirical case, the statistical properties as $n \to \infty$ of the limit deserve a special treatment, connected with possible overfitting issues. This important discussion is postponed to Section \ref{section4}.
\paragraph{\hyperref[algorithm2]{Algorithm 2}.} The principle we used so far rests upon the simple Taylor-like identity (\ref{GD1}), which  encourages us to imitate the definition of the negative gradient in the class $\mathscr F$. Still starting from (\ref{GD1}), there is however another strategy, maybe more natural, which consists in choosing $f_{t+1}$ by a least squares approximation of $-\xi(F_t(X),Y)$. To follow this route, we modify a bit the collection of weak learners, and consider a class $\mathscr P\subset L^2(\mu_X)$ of functions $f:\mathscr X\to \mathds R$ such that $f\in \mathscr P \Leftrightarrow -f \in \mathscr P$, and $af \in \mathscr P$ for all $(a,f)\in \mathds R\times \mathscr P$ (in particular, $0 \in \mathscr P$, which is thus a cone of $L^2(\mu_X)$).  Binary trees in $\mathds R^d$ using axis parallel cuts with $k$ terminal nodes are a good example of a possible class $\mathscr P$. These base learners are of the form $f=\sum_{j=1}^{k} \beta_j \mathds 1_{A_j}$, where this time $(\beta_1, \hdots, \beta_{k})\in \mathds R^{k}$, without any normative constraint.

Given $F_t$, the idea of \hyperref[algorithm2]{Algorithm 2} is to choose $f_{t+1} \in \mathscr P$ that minimizes the squared norm between $-\xi(F_t(X),Y)$ and $f_{t+1}(X)$, i.e., to let
$$f_{t+1}\in {\arg\min}_{f \in \mathscr P}\mathds E (-\xi(F_{t}(X),Y)-f(X))^2,$$
or, equivalently, 
$$f_{t+1}\in {\arg\min}_{f \in \mathscr P} \big(2\mathds E \xi(F_{t}(X),Y)f(X)+\|f\|_{\mu_X}^2\big).$$
A more algorithmic format is shown below.
\begin{algorithm}
\caption{}
\label{algorithm2}
\begin{algorithmic}[1]
\STATE {\bf Require} $\nu$ a positive real number.
\STATE {\bf Set} $t=0$ and start with $F_{0} \in \mathscr P$.
\STATE {\bf Compute}
\begin{equation}
\label{optimizationstep2}
f_{t+1}\in {\arg\min}_{f \in \mathscr P} \big(2\mathds E \xi(F_{t}(X),Y)f(X)+\|f\|_{\mu_X}^2\big)
\end{equation}
and {\bf let} $F_{t+1}=F_{t}+\nu f_{t+1}$.
\STATE {\bf Take} $t\leftarrow t+1$ and {\bf go} to step 3.
\end{algorithmic}
\end{algorithm}
We note that, contrary to \hyperref[algorithm1]{Algorithm 1}, the step size $\nu$ is kept fixed during the iterations. We will see
in the next section that choosing a small enough $\nu$ (depending in particular on the Lipschitz constant of Assumption ${\bf A_3}$) is sufficient to ensure the convergence of the algorithm. In the empirical setting, assuming that $\psi$ is continuously differentiable in its first argument, the optimization step (\ref{optimizationstep2}) reads
$$f_{t+1}\in {\arg \max}_{f \in \mathscr P} \frac{1}{n}\sum_{i=1}^n (-\nabla C (F_t)(X_i)-f(X_i))^2.$$
Therefore, in this context, the gradient boosting algorithm fits $f_{t+1}$ to the negative gradient instances $-\nabla C (F_t)(X_i)$ via a least squares minimization. When $\psi(x,y)=(y-x)^2/2$, then $-\nabla C (F_t)(X_i)=Y_i-F_t(X_i)$, and the algorithm simply fits $f_{t+1}$ to the residuals $Y_i-F_t(X_i)$ at step $t$, in the spirit of original boosting procedures. This observation is at the source of gradient boosting, which \hyperref[algorithm2]{Algorithm 2} generalizes to a much larger variety of loss functions and to more abstract measures.
\section{Convergence of the algorithms}
\label{section3}
This section is devoted to analyzing the convergence of the gradient boosting \hyperref[algorithm1]{Algorithm 1} and \hyperref[algorithm2]{Algorithm 2} as the number of iterations $t$ tends to infinity. Despite its importance, no results (or only partial answers) have been reported so far on this question.
\subsection{\hyperref[algorithm1]{Algorithm 1}}
The convergence of this algorithm rests upon the choice of the step size sequence $(w_t)_t$, which needs to be carefully specified. We take $w_0>0$ arbitrarily and set
\begin{equation}
\label{choixW}
w_{t+1}=\min \big(w_t,-(2L)^{-1}\mathds E \xi(F_t(X),Y)f_{t+1}(X)\big), \quad t\geq 0,
\end{equation}
where $L$ is the Lipschitz constant of Assumption ${\bf A_3}$. Clearly, the sequence $(w_t)_t$ is nonincreasing. It is also nonnegative. To see this, just note that, by definition,
$$f_{t+1}\in {\arg\max}_{f \in \mathscr F}-\mathds E \xi(F_t(X),Y)f(X),$$
and thus, since $0\in \mathscr F$, $-\mathds E \xi(F_t(X),Y)f_{t+1}(X)\geq 0$.
The main result of this section is encapsulated in the following theorem.
\begin{theo}
\label{convergenceCn}
Assume that Assumptions ${\bf A_1}$ and ${\bf A_3}$ are satisfied, and let $(F_t)_t$ be defined by \hyperref[algorithm1]{Algorithm 1} with $(w_t)_t$ as in (\ref{choixW}). Then
$$\lim_{t \to \infty}C(F_t)=\inf_{F \in {\emph{\footnotesize lin}(\mathscr F)}}C(F).$$
\end{theo}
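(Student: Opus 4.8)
The plan is to combine a one-step \emph{descent estimate}, a telescoping argument, and a subgradient comparison against an arbitrary element of $\mbox{lin}(\mathscr F)$. Throughout I would write $b_t:=-\mathds E\,\xi(F_t(X),Y)f_{t+1}(X)=\max_{f\in\mathscr F}\big(-\mathds E\,\xi(F_t(X),Y)f(X)\big)$; this quantity is finite under ${\bf A_1}$ and nonnegative since $0\in\mathscr F$, and it is exactly the object appearing in (\ref{choixW}), so that $0\le w_{t+1}\le b_t/(2L)$ with $(w_t)_t$ nonincreasing. The first, and main, step is a descent lemma: $C(F_{t+1})\le C(F_t)-\frac{1}{2}w_{t+1}b_t$. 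To obtain it I would apply the subgradient inequality (\ref{sub-zero}) with $x_1=F_t(x)$ and $x_2=F_{t+1}(x)=F_t(x)+w_{t+1}f_{t+1}(x)$, rearrange to $\psi(F_{t+1}(x),y)\le\psi(F_t(x),y)+w_{t+1}\xi(F_{t+1}(x),y)f_{t+1}(x)$, integrate, condition on $X$, and use ${\bf A_3}$ to pass from $\mathds E(\xi(F_{t+1}(X),Y)\,|\,X)$ to $\mathds E(\xi(F_t(X),Y)\,|\,X)$ at the cost of $Lw_{t+1}|f_{t+1}(X)|$. Since $\|f_{t+1}\|_{\mu_X}=1$, this yields $C(F_{t+1})\le C(F_t)-w_{t+1}b_t+Lw_{t+1}^2$, and $w_{t+1}\le b_t/(2L)$ turns the quadratic term into $\frac{1}{2}w_{t+1}b_t$. (All integrations are licit: ${\bf A_1}$ gives $\xi(F_t(X),Y)\in L^2$ near $F_t$ and $C(F_t)<\infty$.)

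From the descent lemma, since $\psi\ge0$ the sequence $(C(F_t))_t$ is nonincreasing and bounded below by $C^\star:=\inf_{F\in\mbox{\footnotesize lin}(\mathscr F)}C(F)$, hence converges to some $\ell\ge C^\star$; summing gives $\sum_{t\ge0}w_{t+1}b_t\le 2\big(C(F_0)-\ell\big)<\infty$. Next I would set up a subgradient comparison: integrating (\ref{sub-zero}) shows that $x\mapsto\mathds E(\xi(F_t(X),Y)\,|\,X=x)$ is a subgradient of the convex functional $C$ at $F_t$, so for every $F^\star\in\mbox{lin}(\mathscr F)$,
$$C(F_t)-C(F^\star)\le\mathds E\big[\xi(F_t(X),Y)\big(F_t(X)-F^\star(X)\big)\big].$$
Writing $F_t-F^\star=(F_0-F^\star)+\sum_{s=1}^t w_s f_s$ and using that $|\mathds E\,\xi(F_t(X),Y)f(X)|\le b_t$ for every $f\in\mathscr F$ (symmetry of $\mathscr F$) together with $w_s\ge0$, I get
$$C(F_t)-C(F^\star)\le b_t\,(B^\star+S_t),\qquad S_t:=\sum_{s=1}^t w_s,$$
where $B^\star$ is a finite $t$-independent $\ell^1$-coefficient bound for the fixed function $F_0-F^\star\in\mbox{lin}(\mathscr F)$.

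To finish, suppose $\ell>C^\star$ and pick $F^\star\in\mbox{lin}(\mathscr F)$ with $\varepsilon:=\ell-C(F^\star)>0$; then $C(F_t)-C(F^\star)\ge\varepsilon$ for all $t$, so $b_t\ge\varepsilon/(B^\star+S_t)$. If $\sup_t S_t<\infty$, then $b_t$ is bounded below by a positive constant, and an immediate induction on (\ref{choixW}) keeps $w_t$ bounded below by a positive constant, forcing $\sum_t w_t=\infty$ and contradicting $\sup_t S_t<\infty$. If instead $S_t\to\infty$, then
$$\sum_t w_{t+1}b_t\ \ge\ \varepsilon\sum_t\frac{w_{t+1}}{B^\star+S_t}\ \ge\ \varepsilon\sum_t\Big(\ln(B^\star+S_{t+1})-\ln(B^\star+S_t)\Big)\ =\ \infty,$$
contradicting the summability from the descent lemma. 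Hence $\ell=C^\star$, which is the assertion.

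The hard part is the descent lemma: because $\psi$ may fail to be differentiable one must work with subgradients throughout, and ${\bf A_3}$ only delivers Lipschitz control \emph{after} conditioning on $X$, so the quadratic remainder has to be generated through $\mathds E(\xi(\cdot,Y)\,|\,X)$ rather than pointwise in $(x,y)$ --- this is exactly what dictates the form of the step size (\ref{choixW}). The remaining parts are bookkeeping, the only mildly delicate point being the two-case split in the last step and the telescoping of $w_{t+1}/(B^\star+S_t)$ against $\ln(B^\star+S_t)$.
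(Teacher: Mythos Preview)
Your argument is correct and takes a genuinely different route from the paper's.

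Both proofs share the same starting ingredient: the one-step descent estimate $C(F_t)-C(F_{t+1})\ge -Lw_{t+1}^2+w_{t+1}b_t$ (this is the paper's technical Lemma~\ref{technical-1}), combined with $w_{t+1}\le b_t/(2L)$. From here the two arguments diverge. The paper records only $C(F_t)-C(F_{t+1})\ge Lw_{t+1}^2$, deduces $\sum_t w_t^2<\infty$ and $w_t\to 0$, then \emph{extracts a subsequence} $(t')$ along which $w_{t'+1}=b_{t'}/(2L)$; along this subsequence $b_{t'}\to 0$, and a Kronecker-lemma computation shows $w_{t'}S_{t'}\to 0$ and hence $\limsup_{t'}\mathds E\,\xi(F_{t'}(X),Y)F_{t'}(X)\le 0$, after which convexity finishes. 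You instead keep the sharper summability $\sum_t w_{t+1}b_t<\infty$, turn the convexity inequality directly into the quantitative bound $C(F_t)-C(F^\star)\le b_t(B^\star+S_t)$ valid for \emph{every} $t$, and close by a contradiction with the two-case split on whether $S_t$ stays bounded, using the elementary telescoping $\tfrac{w_{t+1}}{B^\star+S_t}\ge \ln(B^\star+S_{t+1})-\ln(B^\star+S_t)$.

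Your route is more self-contained: no subsequence extraction, no Kronecker lemma, and the role of the step-size rule (\ref{choixW}) is completely transparent (it is used once in the descent estimate and once in Case~1 of the contradiction). The paper's approach, by contrast, isolates a subsequence on which the ``pseudo-gradient'' $b_t$ itself vanishes, a device it reuses (with weak compactness replacing the explicit step-size formula) in the proof of Theorem~\ref{convergenceCn-algo2} for Algorithm~2; your argument is tailored to Algorithm~1 and to the specific choice (\ref{choixW}). A minor remark: your descent bound $C(F_t)-C(F_{t+1})\ge\tfrac12 w_{t+1}b_t$ is actually stronger than the paper's Lemma~\ref{Cndecreases} (since $Lw_{t+1}^2\le\tfrac12 w_{t+1}b_t$), and this extra strength is exactly what lets you avoid Kronecker.
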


Observe that Theorem \ref{convergenceCn} holds without Assumption ${\bf A_2}$, i.e., there is no need here to assume that the function $\psi(x,y)$ is strongly convex in $x$. However, whenever Assumption ${\bf A_2}$ is satisfied, there exists as in (\ref{Fbar}) a unique boosting predictor $\bar F \in \overline{\mbox{lin}(\mathscr F)}$ such that 
$C(\bar F)=\inf_{F \in {\mbox{\footnotesize lin}(\mathscr F)}}C(F)$, and the theorem guarantees that $\lim_{t \to \infty}C(F_t)=C(\bar F)$.

The proof of the theorem relies on the following lemma, which states that the sequence $(C(F_t))_t$ is nonincreasing. Since $C(F)$ is nonnegative for all $F$, we conclude that $C(F_t)\downarrow \inf_{k} C(F_k)$ as $t\to \infty$. This is the key argument to prove the convergence of $C(F_t)$ towards $\inf_{F \in {\mbox{\footnotesize lin}(\mathscr F)}}C(F)$.
\begin{lem}
\label{Cndecreases}
Assume that Assumptions ${\bf A_1}$ and ${\bf A_3}$ are satisfied. Then, for each $t\geq 0$,
$$C(F_t)-C(F_{t+1}) \geq L w^2_{t+1}.$$
In particular, $C(F_t) \downarrow \inf_k C(F_k)$ as $t\to \infty$, $\sum_{t\geq 1}w^2_{t}<\infty$, and $\lim_{t\to \infty}w_t = 0$. 
\end{lem}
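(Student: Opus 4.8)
The plan is to prove the one-step inequality $C(F_t)-C(F_{t+1})\ge Lw_{t+1}^2$ by combining a subgradient-based ``descent inequality'' with the particular shape of the step-size rule (\ref{choixW}), and then to read off the three consequences by telescoping. Write $g_t:=-\mathds E[\xi(F_t(X),Y)f_{t+1}(X)]\ge 0$, i.e. the value of the maximum in (\ref{optimizationstep}), which is nonnegative because $0\in\mathscr F$; then (\ref{choixW}) reads $w_{t+1}=\min(w_t,g_t/(2L))$, so inductively $w_{t+1}\ge 0$, and since $\mathscr F\subset L^2(\mu_X)$ one has $F_t\in L^2(\mu_X)$ and $C(F_t)<\infty$ for all $t$ by Assumption ${\bf A_1}$.

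First I would establish the descent inequality $C(F_{t+1})\le C(F_t)-g_tw_{t+1}+Lw_{t+1}^2$. Rather than differentiating $C$ (which may be non-smooth), I would use the subgradient inequality (\ref{sub-zero}) with the roles of the two arguments interchanged, namely $\psi(x_1,y)\le\psi(x_2,y)+\xi(x_1,y)(x_1-x_2)$, applied pointwise with $x_1=F_{t+1}(X)=F_t(X)+w_{t+1}f_{t+1}(X)$ and $x_2=F_t(X)$. Taking expectations (the products involved are integrable by Cauchy-Schwarz, Assumption ${\bf A_1}$, and $\|f_{t+1}\|_{\mu_X}\le 1$) gives
\[
C(F_{t+1})\le C(F_t)+w_{t+1}\,\mathds E\big[\xi(F_{t+1}(X),Y)f_{t+1}(X)\big].
\]
To control the last term, I would condition on $X$ and apply Assumption ${\bf A_3}$ with $x_1=F_{t+1}(X)$, $x_2=F_t(X)$ (legitimate, since these are fixed once $X$ is fixed), obtaining almost surely
\[
\big(\mathds E(\xi(F_{t+1}(X),Y)\,|\,X)-\mathds E(\xi(F_t(X),Y)\,|\,X)\big)f_{t+1}(X)\le L w_{t+1}\,f_{t+1}(X)^2 .
\]
Integrating, using the tower rule, $\mathds E f_{t+1}(X)^2=\|f_{t+1}\|_{\mu_X}^2\le 1$, and the definition of $g_t$, yields $\mathds E[\xi(F_{t+1}(X),Y)f_{t+1}(X)]\le -g_t+Lw_{t+1}$; plugging this into the previous display produces the descent inequality.

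The step-size rule then does the rest: since $w_{t+1}\le g_t/(2L)$ and $w_{t+1}\ge 0$, we get $g_tw_{t+1}\ge 2Lw_{t+1}^2$, hence $C(F_t)-C(F_{t+1})\ge g_tw_{t+1}-Lw_{t+1}^2\ge Lw_{t+1}^2$. The three consequences follow mechanically: the inequality shows $(C(F_t))_t$ is nonincreasing, and being bounded below by $0$ it converges to $\inf_k C(F_k)$; summing the telescoping differences gives $L\sum_{t=1}^{T}w_t^2\le C(F_0)-C(F_T)\le C(F_0)<\infty$ for every $T$, so $\sum_{t\ge 1}w_t^2<\infty$; and the general term of a convergent series tends to $0$, so $w_t\to 0$.

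The hard part will be making the non-smooth step rigorous rather than the algebra: one must know that $\xi$ can be chosen so that $\xi(x,\cdot)$ is measurable, that every expectation and conditional expectation above is finite (which is exactly what Assumption ${\bf A_1}$ buys via Cauchy-Schwarz applied to functions in $L^2(\mu_X)$), and that Assumption ${\bf A_3}$ can legitimately be invoked with the $X$-dependent arguments $F_{t+1}(X)$ and $F_t(X)$. Under the smooth variant ${\bf A'_3}$ the same estimate can instead be obtained by writing $C(F_t)-C(F_{t+1})=-\int_0^{w_{t+1}}\frac{d}{ds}C(F_t+sf_{t+1})\,ds$ and bounding the integrand with the Lipschitz gradient, but the subgradient route handles both cases simultaneously; everything else is bookkeeping.
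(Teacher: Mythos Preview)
Your proof is correct and follows essentially the same route as the paper. The paper packages your descent inequality $C(F_{t+1})\le C(F_t)-g_tw_{t+1}+Lw_{t+1}^2$ as a separate technical lemma (Lemma~\ref{technical-1}), proved via the same subgradient inequality at $F_{t+1}$ plus conditioning and Assumption~${\bf A_3}$; the only cosmetic difference is that the paper bounds the cross term with Cauchy--Schwarz whereas you use the pointwise estimate $|\mathds E(\xi(F_{t+1}(X),Y)-\xi(F_t(X),Y)\,|\,X)|\cdot|f_{t+1}(X)|\le Lw_{t+1}f_{t+1}(X)^2$ directly, and the paper separates out the trivial case $f_{t+1}=0$ explicitly while your bound $\|f_{t+1}\|_{\mu_X}\le 1$ absorbs it. The final step $w_{t+1}\le g_t/(2L)\Rightarrow g_tw_{t+1}\ge 2Lw_{t+1}^2$ and the telescoping consequences are identical.
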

\begin{proof}
Let $t\geq 0$. Recall that $F_{t+1}=F_t+w_{t+1}f_{t+1}$. If $f_{t+1}=0$, then $w_{t+1}=0$ and $F_{t+1}=F_{t}$, so that there is nothing to prove. Thus, in the remainder of the proof, it is assumed that $f_{t+1}$ is different from zero and, in turn, that $\|f_{t+1}\|_{\mu_X}=1$. 
Applying technical Lemma \ref{technical-1}, we may write
\begin{align*}
C(F_t) &\geq C(F_{t+1})-w_{t+1}^2L-w_{t+1}\mathds E \xi(F_t(X),Y)f_{t+1}(X)\\
& \geq C(F_{t+1})-w_{t+1}^2L+2L w_{t+1}\min\big( w_t,-(2L)^{-1}\mathds E\xi(F_{t}(X),Y)f_{t+1}(X)\big)\\
&=C(F_{t+1})+Lw_{t+1}^2,
\end{align*}
by definition (\ref{choixW}) of the sequence $(w_t)_t$.
\end{proof}
\begin{proof}[Proof of Theorem \ref{convergenceCn}]
Assume that, for some $t_0\geq 0$, $\sup_{f \in \mathscr F}-\mathds E\xi(F_{t_0}(X),Y)f(X)=0$. Then, by the symmetry of the class $\mathscr F$, for all $f\in \mathscr F$, $\mathds E\xi(F_{t_0}(X),Y)f(X)=0$. We conclude by technical Lemma \ref{technical-2} that
$$C(F_{t})=\inf_{F \in {\mbox{\footnotesize lin}(\mathscr F)}}C(F) \quad\mbox{for all } t\geq t_0,$$
and the result is proved. Thus, in the following, it is assumed that 
$$\sup_{f \in \mathscr F}-\mathds E \xi(F_t(X),Y)f(X)>0 \quad \mbox{for all }t\geq 0.$$
Consequently, $-\mathds E \xi(F_t(X),Y)f_{t+1}(X)>0$ and $w_t>0$ for all $t$. Since $w_t\to 0$, there exists a subsequence $(w_{t'})_{t'}$ such that
\begin{align}
w_{t'+1}&=-(2L)^{-1}\mathds E \xi(F_{t'}(X),Y)f_{t'+1}(X) \nonumber\\
&=(2L)^{-1}\sup_{f \in \mathscr F}-\mathds E \xi(F_{t'}(X),Y)f(X).\label{140717}
\end{align}
Let $\varepsilon >0$. For all $t'$ large enough and all $f\in \mathscr F$, by the symmetry of $\mathscr F$,
$$-\mathds E \xi(F_{t'}(X),Y)f(X) \leq \varepsilon \quad \mbox{and} \quad \mathds E \xi(F_{t'}(X),Y)f(X)\leq \varepsilon,$$ 
and thus $\lim_{t'\to \infty}\mathds E \xi(F_{t'}(X),Y)f(X)=0$ for all $f\in \mathscr F$. We conclude that, for all $G\in \mbox{lin}(\mathscr F)$,
\begin{equation}
\label{LS0}
\lim_{t'\to \infty}\mathds E \xi(F_{t'}(X),Y)G(X) = 0. 
\end{equation}

Assume, without loss of generality, that $F_0=0$, and observe that $F_t=\sum_{k=1}^t w_k f_k$. Thus, we may write 
\begin{align*}
\mathds E \xi(F_{t'}(X),Y)F_{t'}(X)&=\sum_{k=1}^{t'}w_k \mathds E \xi(F_{t'}(X),Y)f_k(X)\\
&\leq \sup_{f \in \mathscr F} \mathds E \xi(F_{t'}(X),Y)f(X)\sum_{k=1}^{t'}w_k\\
&=\sup_{f \in \mathscr F}- \mathds E \xi(F_{t'}(X),Y)f(X)\sum_{k=1}^{t'}w_k\\
& \quad \mbox{(by the symmetry of $\mathscr F$)}\\
& =2Lw_{t'+1}\sum_{k=1}^{t'}w_k,
\end{align*}
by definition of $w_{t'+1}$---see (\ref{140717}). So,
\begin{align*}
\mathds E \xi(F_{t'}(X),Y)F_{t'}(X) \leq 2Lw_{t'}\sum_{k=1}^{t'}w_k &=2Lw_{t'}\sum_{k=1}^{t'}w_k^{-1}w_k^2\\
&\mbox{(because $w_{t'+1}\leq w_{t'}$)}.
\end{align*}
Since $\sum_{k\geq 1} w_k^2<\infty$, and since the sequence $(w_t)_t$ is nonincreasing, positive, and tends to 0 as $t\to \infty$, Kronecker's lemma reveals that $w_{t'}\sum_{k=1}^{t'}w_k^{-1}w_k^2 \to 0$ as $t'\to \infty$. Therefore,
\begin{equation}
\label{LS1}
\limsup_{t'\to \infty} \mathds E \xi(F_{t'}(X),Y)F_{t'}(X)\leq 0.
\end{equation}

Let $\varepsilon>0$ and let $F_{\varepsilon}^{\star} \in \mbox{lin}(\mathscr F)$ be such that
$$\inf_{F \in {\mbox{\footnotesize lin}(\mathscr F)}}C(F) \geq C(F^{\star}_{\varepsilon})-\varepsilon.$$
By the convexity of $C$, we have, for all $t'$,
\begin{align*}
\inf_{F \in {\mbox{\footnotesize lin}(\mathscr F)}}C(F)&\geq C(F^{\star}_{\varepsilon}) - \varepsilon \\
& \geq C(F_{t'})+ \mathds E \xi(F_{t'}(X),Y)(F_{\varepsilon}^{\star}(X)-F_{t'}(X))-\varepsilon\\
&\geq \inf_k C(F_k) +\mathds E \xi(F_{t'}(X),Y)F_{\varepsilon}^{\star}(X)-\mathds E \xi(F_{t'}(X),Y)F_{t'}(X)-\varepsilon.
\end{align*}
Combining (\ref{LS0}) and (\ref{LS1}), we conclude that $\inf_{F \in {\mbox{\footnotesize lin}(\mathscr F)}}C(F) \geq \inf_kC(F_k)-\varepsilon$ for all $\varepsilon>0$, so that
$$\lim_{t \to \infty} C(F_t)=\inf_{k} C(F_k) = \inf_{F \in {\mbox{\footnotesize lin}(\mathscr F)}}C(F),$$ 
which is the desired result.
 \end{proof}

Theorem \ref{convergenceCn} ensures that the risk of the boosting iterates gets closer and closer to the minimal risk as the number of iterations grows. It turns out that, whenever $\overline{\mbox{lin}(\mathscr F)}=L^2(\mu_X)$, under Assumption ${\bf A_2}$ and the smooth framework of Assumption ${\bf A'_3}$, the sequence $(F_t)_t$ itself approaches $\bar F={\arg\min}_{F \in L^2(\mu_X)}C(F)$, as shown in Corollary \ref{coro-convergenceCn} below. This corollary is an easy consequence of Theorem \ref{convergenceCn} and the strong convexity of $C$. 
\begin{cor}
\label{coro-convergenceCn}
Assume that $\overline{\emph{lin}(\mathscr F)}=L^2(\mu_X)$. Assume, in addition, that Assumptions ${\bf A_1}$, ${\bf A_2}$, and ${\bf A'_3}$ are satisfied, and let $(F_t)_t$ be defined by \hyperref[algorithm1]{Algorithm 1} with $(w_t)_t$ as in (\ref{choixW}). Then
$$\lim_{t \to \infty}\|F_t-\bar F\|_{\mu_X}=0,$$
where
$$
\bar F = {\arg\min}_{F \in L^2(\mu_X)} C(F).
$$
\end{cor}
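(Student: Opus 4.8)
The plan is to combine Theorem \ref{convergenceCn} with the $\alpha$-strong convexity of $C$ (which holds under Assumption ${\bf A_2}$, as noted after inequality (\ref{sub-one})). First I would record that since $\overline{\mbox{lin}(\mathscr F)}=L^2(\mu_X)$, the infimum of $C$ over $\mbox{lin}(\mathscr F)$ equals the infimum over all of $L^2(\mu_X)$, and under Assumption ${\bf A_2}$ this infimum is attained at the unique point $\bar F=\arg\min_{F\in L^2(\mu_X)}C(F)$ (existence and uniqueness follow from strong convexity plus the continuity/local boundedness granted by ${\bf A_1}$; this is the $\bar F$ of (\ref{Fbar}) specialized to the full space). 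By Theorem \ref{convergenceCn} we then have $C(F_t)\to C(\bar F)$.

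The key step is the standard strong-convexity inequality relating the optimality gap to the distance from the minimizer: for any $F\in L^2(\mu_X)$,
$$C(F)\geq C(\bar F)+\frac{\alpha}{2}\|F-\bar F\|_{\mu_X}^2.$$
To justify this in the present setting I would use that, in the smooth framework ${\bf A'_3}$, $C$ is differentiable with gradient $\nabla C(\bar F)$, and since $\bar F$ minimizes $C$ over the whole space $L^2(\mu_X)$, the first-order optimality condition gives $\nabla C(\bar F)=0$; plugging this into inequality (\ref{sub-one}) integrated against $\mu_{X,Y}$ (equivalently, the $\alpha$-strong convexity of $C$ as stated in the text) yields $C(F)-C(\bar F)\geq \frac{\alpha}{2}\|F-\bar F\|_{\mu_X}^2$. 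Applying this with $F=F_t$ gives
$$\|F_t-\bar F\|_{\mu_X}^2\leq \frac{2}{\alpha}\big(C(F_t)-C(\bar F)\big),$$
and the right-hand side tends to $0$ by Theorem \ref{convergenceCn}, which proves the claim.

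The only real subtlety — and the step I would be most careful about — is verifying that $\nabla C(\bar F)=0$, i.e.\ that the first-order optimality condition genuinely holds at the minimizer over the infinite-dimensional space $L^2(\mu_X)$; this is where the hypothesis $\overline{\mbox{lin}(\mathscr F)}=L^2(\mu_X)$ is essential, since without it $\bar F$ is only a minimizer over a proper closed subspace and the gradient need only be orthogonal to that subspace. Once differentiability (from ${\bf A'_3}$) and the fact that $\bar F$ ranges over all directions $G\in L^2(\mu_X)$ are in hand, $dC(\bar F;G)=\langle\nabla C(\bar F),G\rangle_{\mu_X}\geq 0$ for all $G$ forces $\nabla C(\bar F)=0$. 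Everything else is a direct quantitative consequence of strong convexity combined with the already-established convergence of the risk values.
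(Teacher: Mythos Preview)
Your proposal is correct and follows essentially the same route as the paper: apply the $\alpha$-strong convexity inequality at $\bar F$, use the first-order optimality condition $\nabla C(\bar F)=0$ (available under ${\bf A'_3}$ because $\bar F$ minimizes over all of $L^2(\mu_X)$), and conclude $\|F_t-\bar F\|_{\mu_X}^2\leq \tfrac{2}{\alpha}(C(F_t)-C(\bar F))\to 0$ via Theorem~\ref{convergenceCn}. Your discussion of why the hypothesis $\overline{\mbox{lin}(\mathscr F)}=L^2(\mu_X)$ is needed is in fact more explicit than the paper's own proof.
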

\begin{proof}
By the $\alpha$-strong convexity of $C$, 
$$C(F_t) \geq C(\bar F) +\mathds E \xi(\bar F,Y)(F_t-\bar F)+ \frac{\alpha}{2}\|F_t-\bar F\|_{\mu_X}^2,$$
which, under ${\bf A'_3}$, takes the more familiar form
$$C(F_t) \geq C(\bar F) +\langle \nabla C(\bar F),F_t-\bar F\rangle_{\mu_X}+ \frac{\alpha}{2}\|F_t-\bar F\|_{\mu_X}^2.$$
But, since $\bar F = {\arg\min}_{F \in L^2(\mu_X)} C(F)$, we know that $\langle \nabla C(\bar F),F_t-\bar F\rangle_{\mu_X}=0$. Thus,
$$C(F_t) -C(\bar F) \geq  \frac{\alpha}{2}\|F_t-\bar F\|_{\mu_X}^2,$$
and the conclusion follows from Theorem \ref{convergenceCn}.
 \end{proof}
\subsection{\hyperref[algorithm2]{Algorithm 2}} Recall that, in this context, each iteration picks an $f_{t+1} \in \mathscr P$ that satisfies
$$2\mathds E \xi(F_{t}(X),Y)f_{t+1}(X)+\|f_{t+1}\|_{\mu_X}^2\leq 2\mathds E \xi(F_{t}(X),Y)f(X)+\|f\|_{\mu_X}^2 \quad \mbox{for all } f\in \mathscr P.$$
\begin{theo}
\label{convergenceCn-algo2} 
Assume that Assumptions ${\bf A_1}$-${\bf A_3}$ are satisfied, and let $(F_t)_t$ be defined by \hyperref[algorithm2]{Algorithm 2} with  $0<\nu<1/(2L)$. Then
$$\lim_{t \to \infty}C(F_t)=\inf_{F \in {\emph{\footnotesize lin}(\mathscr P)}}C(F).$$
\end{theo}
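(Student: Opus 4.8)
The plan is to mirror the proof of Theorem~\ref{convergenceCn}: establish a quantitative decrease of $(C(F_t))_t$, use the summability this produces to show that the subgradient of $C$ at the iterates becomes asymptotically orthogonal to $\mbox{lin}(\mathscr P)$, and then close the argument by convexity. First I would prove an analogue of Lemma~\ref{Cndecreases}. The elementary fact that makes this work is that, $\mathscr P$ being a cone and $f_{t+1}$ minimizing $f\mapsto 2\mathds E\xi(F_t(X),Y)f(X)+\|f\|_{\mu_X}^2$ over $\mathscr P$, the value $a=1$ minimizes the quadratic $a\mapsto 2a\,\mathds E\xi(F_t(X),Y)f_{t+1}(X)+a^2\|f_{t+1}\|_{\mu_X}^2$ over $a\in\mathds R$, so that $\mathds E\xi(F_t(X),Y)f_{t+1}(X)=-\|f_{t+1}\|_{\mu_X}^2$. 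Feeding $F_{t+1}=F_t+\nu f_{t+1}$ into technical Lemma~\ref{technical-1} then gives
$$C(F_{t+1})\leq C(F_t)+\nu\,\mathds E\xi(F_t(X),Y)f_{t+1}(X)+L\nu^2\|f_{t+1}\|_{\mu_X}^2=C(F_t)-\nu(1-L\nu)\|f_{t+1}\|_{\mu_X}^2,$$
and the constraint $0<\nu<1/(2L)$ turns this into $C(F_t)-C(F_{t+1})\geq(\nu/2)\|f_{t+1}\|_{\mu_X}^2\geq 0$. As in Lemma~\ref{Cndecreases}, this yields $C(F_t)\downarrow\inf_k C(F_k)$, $\sum_{t\geq 1}\|f_t\|_{\mu_X}^2<\infty$, and $\|f_t\|_{\mu_X}\to 0$.

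Next I would use the cone structure of $\mathscr P$ a second time: for every $g\in\mathscr P$, comparing the minimal value $2\mathds E\xi(F_t(X),Y)f_{t+1}(X)+\|f_{t+1}\|_{\mu_X}^2=-\|f_{t+1}\|_{\mu_X}^2$ of the optimization step against the value attained at $ag$, $a\in\mathds R$, gives $-\|f_{t+1}\|_{\mu_X}^2\leq-\big(\mathds E\xi(F_t(X),Y)g(X)\big)^2/\|g\|_{\mu_X}^2$, i.e.\ $|\mathds E\xi(F_t(X),Y)g(X)|\leq\|f_{t+1}\|_{\mu_X}\|g\|_{\mu_X}$; the degenerate case where this inner product already vanishes for all $g\in\mathscr P$ at some step $t_0$ is disposed of exactly as in the proof of Theorem~\ref{convergenceCn}, through technical Lemma~\ref{technical-2}. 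Since $\|f_{t+1}\|_{\mu_X}\to 0$ and the bound is linear in $g$, one gets $\mathds E\xi(F_t(X),Y)G(X)\to 0$ for every $G\in\mbox{lin}(\mathscr P)$, which is the counterpart of~(\ref{LS0}).

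The harder part is to control $\mathds E\xi(F_t(X),Y)F_t(X)$, the analogue of~(\ref{LS1}), and this is where Assumption ${\bf A_2}$ — assumed here, unlike in Theorem~\ref{convergenceCn} — becomes essential: applying~(\ref{sub-one}) at $x_1=F_t(x)$, $x_2=F_0(x)$, together with $C(F_t)\leq C(F_0)$ and the Cauchy--Schwarz inequality, gives $\frac{\alpha}{2}\|F_t-F_0\|_{\mu_X}^2\leq\big(\mathds E\xi(F_0(X),Y)^2\big)^{1/2}\|F_t-F_0\|_{\mu_X}$ (the right-hand expectation being finite by ${\bf A_1}$), so that $M:=\sup_t\|F_t\|_{\mu_X}<\infty$. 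Taking $F_0=0$ without loss of generality (as in the proof of Theorem~\ref{convergenceCn}) and writing $F_t=\nu\sum_{k=1}^t f_k$, the naive bound from the previous paragraph only gives $|\mathds E\xi(F_t(X),Y)F_t(X)|\leq\nu\|f_{t+1}\|_{\mu_X}\sum_{k=1}^t\|f_k\|_{\mu_X}$, which need not vanish since $\sum_k\|f_k\|_{\mu_X}$ may diverge — and, the step size $\nu$ being held constant, there is no Kronecker's lemma argument available as there was for Theorem~\ref{convergenceCn}. I expect this to be the main obstacle. My plan would be to exploit the telescoping identity $\mathds E\xi(F_{t+1}(X),Y)F_{t+1}(X)-\mathds E\xi(F_t(X),Y)F_t(X)=\mathds E\big[\,\mathds E(\xi(F_{t+1}(X),Y)-\xi(F_t(X),Y)\,|\,X)\,F_{t+1}(X)\big]-\nu\|f_{t+1}\|_{\mu_X}^2$, combined with the Lipschitz bound of ${\bf A_3}$, the strong monotonicity of $x\mapsto\mathds E(\xi(x,Y)\,|\,X)$ coming from ${\bf A_2}$, the boundedness $\sup_t\|F_t\|_{\mu_X}<\infty$, and the summability $\sum_t\|f_{t+1}\|_{\mu_X}^2<\infty$, in order to reach $\limsup_{t\to\infty}\mathds E\xi(F_t(X),Y)F_t(X)\leq 0$; making the crossed terms of this telescoping sum not accumulate is the delicate point, and is where strong convexity (via boundedness of the iterates) should do the real work.

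Granting $\limsup_{t\to\infty}\mathds E\xi(F_t(X),Y)F_t(X)\leq 0$, the conclusion follows verbatim from the last part of the proof of Theorem~\ref{convergenceCn}: fixing $\varepsilon>0$ and $F_\varepsilon^\star\in\mbox{lin}(\mathscr P)$ with $\inf_{F\in\mbox{lin}(\mathscr P)}C(F)\geq C(F_\varepsilon^\star)-\varepsilon$, convexity of $C$ gives, for every $t$,
$$\inf_{F\in\mbox{lin}(\mathscr P)}C(F)\geq C(F_t)+\mathds E\xi(F_t(X),Y)\big(F_\varepsilon^\star(X)-F_t(X)\big)-\varepsilon\geq\inf_k C(F_k)+\mathds E\xi(F_t(X),Y)F_\varepsilon^\star(X)-\mathds E\xi(F_t(X),Y)F_t(X)-\varepsilon;$$
letting $t\to\infty$ and using the two previous paragraphs yields $\inf_{F\in\mbox{lin}(\mathscr P)}C(F)\geq\inf_k C(F_k)-\varepsilon$, so that, $\varepsilon$ being arbitrary and $\inf_k C(F_k)\geq\inf_{F\in\mbox{lin}(\mathscr P)}C(F)$ trivially, $\lim_t C(F_t)=\inf_k C(F_k)=\inf_{F\in\mbox{lin}(\mathscr P)}C(F)$.
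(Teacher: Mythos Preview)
Your first two steps are correct and in fact cleaner than the paper's. The cone identity $\mathds E\xi(F_t(X),Y)f_{t+1}(X)=-\|f_{t+1}\|_{\mu_X}^2$ is sharper than what the paper uses for the descent lemma, and your inequality $|\mathds E\xi(F_t(X),Y)g(X)|\leq\|f_{t+1}\|_{\mu_X}\|g\|_{\mu_X}$ for $g\in\mathscr P$ gives convergence of $\mathds E\xi(F_t(X),Y)G(X)$ to $0$ along the \emph{full} sequence for every fixed $G\in\mbox{lin}(\mathscr P)$, whereas the paper takes a detour through weak sequential compactness of the unit ball of $L^2(\mu_X)$ to get this only along a subsequence. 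Your use of ${\bf A_2}$ to bound $\sup_t\|F_t\|_{\mu_X}$ is exactly the paper's Lemma~\ref{technical-3}.

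The gap is in your third paragraph. The telescoping route you sketch does not close: the cross term $\mathds E\big[\mathds E(\xi(F_{t+1}(X),Y)-\xi(F_t(X),Y)\,|\,X)F_{t+1}(X)\big]$ is controlled by ${\bf A_3}$ and Cauchy--Schwarz only up to $L\nu M\|f_{t+1}\|_{\mu_X}$, and summing this produces $\sum_k\|f_k\|_{\mu_X}$, which may diverge. Strong monotonicity from ${\bf A_2}$ gives a \emph{lower} bound on $\langle\Xi_{t+1}-\Xi_t,F_{t+1}-F_t\rangle$, which points the wrong way; beyond the boundedness of the iterates it does not help here. The paper avoids telescoping entirely: it fixes $T$ large so that $\sum_{k>T}\|f_k\|_{\mu_X}^2\leq\varepsilon$, writes $F_t=F_T+\nu\sum_{k=T+1}^t f_k$, bounds each $\mathds E\xi(F_t(X),Y)f_k(X)$ using the optimality of $f_{t+1}$, and then extracts a further subsequence $(t'')$ along which $t''\|f_{t''+1}\|_{\mu_X}^2\to 0$ (such a subsequence exists because $\sum_k\|f_k\|_{\mu_X}^2<\infty$). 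With your sharper pointwise bound the same scheme becomes even simpler: $\big|\nu\sum_{k=T+1}^t\mathds E\xi(F_t(X),Y)f_k(X)\big|\leq\nu\|f_{t+1}\|_{\mu_X}\sum_{k=T+1}^t\|f_k\|_{\mu_X}\leq\nu\sqrt{\varepsilon}\cdot\sqrt{t}\,\|f_{t+1}\|_{\mu_X}$ by Cauchy--Schwarz, and along the subsequence $(t'')$ this vanishes; the term $\mathds E\xi(F_{t''}(X),Y)F_T(X)$ vanishes by your second step since $F_T\in\mbox{lin}(\mathscr P)$ is fixed. Since $(C(F_t))_t$ is nonincreasing, convergence along $(t'')$ suffices. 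So the missing idea is the $F_T$-splitting plus the subsequence with $t''\|f_{t''+1}\|_{\mu_X}^2\to 0$, not telescoping.
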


The architecture of the proof is similar to that of Theorem \ref{convergenceCn}. (Note however that this theorem requires the strong convexity Assumption ${\bf A_2}$). In particular, we need the following important lemma, which states that the risk of the iterates decreases at each step of the algorithm.
\begin{lem}
\label{Cndecreases-algo2}
Assume that Assumptions ${\bf A_1}$ and ${\bf A_3}$ are satisfied, and let $0<\nu<1/(2L)$. Then, for each $t\geq 0$,
$$C(F_t)-C(F_{t+1}) \geq \frac{\nu}{2}(1-2\nu L) \|f_{t+1}\|_{\mu_X} ^2.$$
In particular, $C(F_t) \downarrow \inf_k C_k$ as $t\to \infty$, $\sum_{t\geq 1}\|f_t\|^2_{\mu_X}<\infty$, and $\lim_{t\to \infty}\|f_t\|_{\mu_X} = 0$. 
\end{lem}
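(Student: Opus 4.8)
The plan is to follow the blueprint of Lemma \ref{Cndecreases}, substituting the role played there by the step-size rule (\ref{choixW}) with the geometric constraint that defines $f_{t+1}$ in \hyperref[algorithm2]{Algorithm 2}. As in that proof, the argument splits into a one-step descent inequality followed by a routine summation.

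First I would apply technical Lemma \ref{technical-1} at $F=F_t$ with increment $\nu f_{t+1}$. Under Assumptions ${\bf A_1}$ and ${\bf A_3}$ this furnishes the quadratic upper bound
$$C(F_{t+1}) \le C(F_t) + \nu\,\mathds E\,\xi(F_t(X),Y)f_{t+1}(X) + L\nu^2\|f_{t+1}\|_{\mu_X}^2,$$
that is, $C(F_t)-C(F_{t+1}) \ge -\nu\,\mathds E\,\xi(F_t(X),Y)f_{t+1}(X) - L\nu^2\|f_{t+1}\|_{\mu_X}^2$. The second ingredient is the optimality of $f_{t+1}$: since $f_{t+1}$ minimizes $f\mapsto 2\mathds E\,\xi(F_t(X),Y)f(X)+\|f\|_{\mu_X}^2$ over $\mathscr P$ and $0\in\mathscr P$, testing against $f=0$ gives $2\mathds E\,\xi(F_t(X),Y)f_{t+1}(X)+\|f_{t+1}\|_{\mu_X}^2\le 0$, hence $-\mathds E\,\xi(F_t(X),Y)f_{t+1}(X)\ge \frac12\|f_{t+1}\|_{\mu_X}^2$. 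Substituting this into the previous inequality yields
$$C(F_t)-C(F_{t+1}) \ge \frac{\nu}{2}\|f_{t+1}\|_{\mu_X}^2 - L\nu^2\|f_{t+1}\|_{\mu_X}^2 = \frac{\nu}{2}(1-2\nu L)\|f_{t+1}\|_{\mu_X}^2,$$
which is the announced bound. (If $f_{t+1}=0$, both sides vanish and there is nothing to prove.)

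For the ``in particular'' assertions, note that $0<\nu<1/(2L)$ makes $\frac{\nu}{2}(1-2\nu L)$ strictly positive, so $(C(F_t))_t$ is nonincreasing; since $\psi\ge 0$ forces $C\ge 0$, the sequence converges to $\inf_k C(F_k)$. Telescoping the one-step bound over $t=0,\dots,T-1$ gives $\frac{\nu}{2}(1-2\nu L)\sum_{t=1}^{T}\|f_t\|_{\mu_X}^2 \le C(F_0)-C(F_T)\le C(F_0)$, and $C(F_0)<\infty$ by Assumption ${\bf A_1}$; letting $T\to\infty$ yields $\sum_{t\ge 1}\|f_t\|_{\mu_X}^2<\infty$, and therefore $\|f_t\|_{\mu_X}\to 0$.

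The only genuinely delicate point is the very first step: checking that technical Lemma \ref{technical-1} legitimately supplies the quadratic upper bound with a constant governed by the Lipschitz constant $L$ of Assumption ${\bf A_3}$, even when $\psi$ is not differentiable and $\xi$ is merely a subgradient. Once that lemma is in hand, everything else is bookkeeping, and, exactly as in Lemma \ref{Cndecreases}, no appeal to the strong convexity Assumption ${\bf A_2}$ is needed here.
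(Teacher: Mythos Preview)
Your proof is correct and follows essentially the same approach as the paper: apply technical Lemma \ref{technical-1} with $a=\nu$, $G=f_{t+1}$, then use the optimality of $f_{t+1}$ against $f=0\in\mathscr P$ to obtain $2\mathds E\,\xi(F_t(X),Y)f_{t+1}(X)+\|f_{t+1}\|_{\mu_X}^2\le 0$, and combine. The only cosmetic difference is that the paper adds and subtracts $\frac{\nu}{2}\|f_{t+1}\|_{\mu_X}^2$ before invoking the optimality inequality, whereas you rearrange the optimality inequality first and then substitute; the two computations are line-for-line equivalent.
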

\begin{proof}
Let $t\geq 0$. Applying technical Lemma \ref{technical-1}, we may write
\begin{align*}
C(F_t)&\geq C(F_{t+1})-\nu^2L\|f_{t+1}\|_{\mu_X}^2-\nu\mathds E \xi(F_t(X),Y)f_{t+1}(X)\\
&=C(F_{t+1})-\nu^2L\|f_{t+1}\|_{\mu_X}^2-\frac{\nu}{2}\big(2\mathds E \xi(F_t(X),Y)f_{t+1}(X)+\|f_{t+1}\|_{\mu_X}^2\big)+\frac{\nu}{2}\|f_{t+1}\|_{\mu_X}^2.
\end{align*}
Upon noting that $2\mathds E \xi(F_t(X),Y)f_{t+1}(X)+\|f_{t+1}\|_{\mu_X}^2\leq 0$ (since $0\in \mathscr P$), we conclude that
$$C(F_t)\geq C(F_{t+1})+\frac{\nu}{2}(1-2\nu L) \|f_{t+1}\|_{\mu_X}^2.$$
\end{proof}
\begin{proof}[Proof of Theorem \ref{Cndecreases-algo2}]
The first step is to establish that there exists a subsequence $(F_{t'})_{t'}$ such that $\lim_{t'\to \infty}\mathds E \xi(F_{t'}(X),Y)G(X)\to 0$ for all $G\in \mbox{lin}(\mathscr P)$. We start by observing that $C(F_t)\leq C(F_0)$. Thus, by technical Lemma \ref{technical-3}, $\sup_{t}\|F_t\|_{\mu_X}\leq B$ for some positive constant $B$. Now,
\begin{align*}
|\mathds E\xi (F_t(X),Y)f_{t+1}(X)|&=|\mathds E\mathds E (\xi(F_t(X),Y)\,|\,X)f_{t+1}(X)|\\
&\leq \mathds E \big|\mathds E(\xi(F_t(X),Y)-\xi(0,Y)\,|\,X)\big| \cdot |f_{t+1}(X)|+\mathds E|\xi(0,Y)f_{t+1}(X)|\\
&\leq L \mathds E|F_{t}(X)f_{t+1}(X)|+\mathds E|\xi(0,Y)f_{t+1}(X)|\\
& \quad \mbox{(by Assumption ${\bf A_3}$)}\\
& \leq L\|F_t\|_{\mu_X}\|f_{t+1}\|_{\mu_X}+\big(\mathds E\xi(0,Y)^2\big)^{1/2}\|f_{t+1}\|_{\mu_X}\\
& \quad \mbox{(by the Cauchy-Schwarz inequality)}\\
& \leq \big( LB +\big(\mathds E\xi(0,Y)^2\big)^{1/2}\big)\|f_{t+1}\|_{\mu_X}.
\end{align*}
Consequently, since $\lim_{t\to \infty}\|f_{t+1}\|_{\mu_X}=0$, 
\begin{align*}
\inf_{f \in \mathscr P} \big(2\mathds E \xi(F_t(X),Y)f(X)+\|f\|_{\mu_X}^2\big)&=2\mathds E \xi(F_t(X),Y)f_{t+1}(X)+\|f_{t+1}\|_{\mu_X}^2\\
&\to 0\mbox{ as }t\to \infty.
\end{align*}
Accordingly, by the symmetry of $\mathscr P$, for all $\varepsilon>0$ and all $t$ large enough, we have, for all $f\in \mathscr P$, 
$$2\mathds E \xi(F_t(X),Y)f(X)+\|f\|_{\mu_X}^2\geq -\varepsilon \quad \mbox{and} -2\mathds E \xi(F_t(X),Y)f(X)+\|f\|_{\mu_X}^2\geq -\varepsilon.$$
So, for all $t$ large enough and all $f\in \mathscr P$,
$$|2\mathds E \xi(F_t(X),Y)f(X)|\leq \varepsilon+ \|f\|_{\mu_X}^2.$$
Since $\varepsilon$ was arbitrary, we conclude that, for all $f\in \mathscr P$,
\begin{equation}
\label{star-algo2}
2 {\limÊ\sup}_{t \to \infty}|\mathds E \xi (F_t(X),Y)f(X)|\leq \|f\|_{\mu_X}^2.
\end{equation}
On the other hand, by Assumption ${\bf A_3}$,
$$|\mathds E(\xi(F_t(X),Y)\,|\,X)|\leq \mathds E(|\xi(0,Y)|\,\big|\,X)+L|F_t(X)|.$$
Since $\sup_t \|F_t\|_{\mu_X}<\infty$, we deduce that
$$\sup_t \|\mathds E(\xi(F_t(X),Y)\,|\,X=\cdot)\|_{\mu_X}<\infty.$$
Therefore, recalling that the unit ball of $L^2(\mu_X)$ is weakly compact, we see that there exists a subsequence
$(F_{t'})_{t'}$ and $\tilde F \in L^2(\mu_X)$ such that, for all $G\in \mbox{lin}(\mathscr P)$,
$$\mathds E \xi (F_{t'}(X),Y)G(X)=\mathds E \mathds E (\xi (F_{t'}(X),Y)\,|\,X)G(X) \to \mathds E \tilde F(X)G(X).$$
Combining this identity with (\ref{star-algo2}) reveals that $2|\mathds E \tilde F(X)f(X)|\leq \|f\|_{\mu_X}^2$ for all $f\in \mathscr P$.
In particular, for all $\varepsilon >0$ and all $f\in \mathscr P$, $2|\mathds E \tilde F(X)\varepsilon f(X)|\leq \varepsilon^2\|f\|_{\mu_X}^2$, and thus, letting $\varepsilon \downarrow 0$, we find that $\mathds E \tilde F(X)f(X)=0$ for all $f\in \mathscr P$. By a linearity argument, we conclude that $\mathds E \tilde F(X)G(X)=0$ for all $G\in \mbox{lin}(\mathscr P)$. Therefore, for all $G\in \mbox{lin}(\mathscr P)$,
\begin{equation}
\label{2star-algo2}
\lim_{t'\to \infty}\mathds E \xi (F_{t'}(X),Y)G(X)= 0,
\end{equation}
which was our first objective.

The next step is to prove that ${\lim\sup}_{t''\to \infty} \mathds E \xi (F_{t''}(X),Y)F_{t''}(X)\leq 0$ for some subsequence $(F_{t''})_{t''}$ of $(F_{t'})_{t'}$. To simplify the notation, we assume, without loss of generality, that $F_0=0$. Fix $\varepsilon >0$. Since $\sum_{k\geq 1}\|f_k\|_{\mu_X}^2<\infty$, there exists $T\geq 0$ such that $\sum_{k \geq T+1} \|f_k\|_{\mu_X}^2\leq \varepsilon$. In addition, for all $t>T$, $F_t=F_T+\nu \sum_{k=T+1}^t f_k$, so that
\begin{equation}
\label{3star-algo2}
\mathds E \xi(F_t(X),Y)F_t(X)=\mathds E\xi(F_t(X),Y)F_T(X)+\nu \sum_{k=T+1}^t \mathds E \xi(F_t(X),Y)f_{k}(X).
\end{equation}
Also, by the very definition of $f_{t+1}$ and the symmetry of $\mathscr P$, we have, for all $f\in \mathscr P$,
\begin{equation}
\label{4star-algo2}
2 \mathds E \xi(F_t(X),Y)f_{t+1}(X)+\|f_{t+1}\|_{\mu_X}^2\leq -2 \mathds E \xi (F_t(X),Y)f(X)+\|f\|_{\mu_X}^2,
\end{equation}
i.e., for all $f\in \mathscr P$,
$$2 \mathds E \xi(F_t(X),Y)f(X)\leq -2 \mathds E \xi (F_t(X),Y)f_{t+1}(X)-\|f_{t+1}\|_{\mu_X}^2+\|f\|_{\mu_X}^2.$$
Using (\ref{3star-algo2}), this leads to
\begin{align}
&\mathds E\xi(F_t(X),Y)F_t(X) \nonumber\\
& \quad \leq \mathds E \xi(F_t(X),Y)F_T(X)+\frac{\nu}{2}\Big(t\big(-2\mathds E\xi(F_t(X),Y)f_{t+1}(X)-\|f_{t+1}\|_{\mu_X}^2\big)+\sum_{k \geq T+1}\|f_k\|_{\mu_X}^2\Big)\nonumber\\
&\quad  \leq \frac{\varepsilon \nu}{2}+\mathds E \xi(F_t(X),Y)F_T(X)+\frac{\nu t}{2}\big(-2\mathds E\xi(F_t(X),Y)f_{t+1}(X)-\|f_{t+1}\|_{\mu_X}^2\big).\label{BLDS}
\end{align}
But, according to inequality (\ref{4star-algo2}) applied with $f=-2f_{t+1}$ (which belongs to $\mathscr P$ by assumption),
$$2\mathds E \xi(F_t(X),Y)f_{t+1}(X)+\|f_{t+1}\|_{\mu_X}^2 \leq 4\mathds E \xi(F_t(X),Y)f_{t+1}(X)+4\|f_{t+1}\|_{\mu_X}^2,$$
i.e.,
$$-2\mathds E \xi(F_t(X),Y)f_{t+1}(X)\leq 3\|f_{t+1}\|_{\mu_X}^2.$$
Combining this inequality with (\ref{BLDS}) shows that
$$\mathds E \xi(F_t(X),Y)F_{t}(X)\leq \frac{\varepsilon \nu}{2}+\mathds E \xi(F_t(X),Y)F_{T}(X)+\nu t\|f_{t+1}\|_{\mu_X}^2.$$
Since $\sum_{k\geq 1} \|f_k\|_{\mu_X}^2<\infty$, one has $\sum_{t'}\|f_{t'+1}\|_{\mu_X}^2<\infty$, which guarantees the existence of a subsequence  $(f_{t''})_{t''}$ satisfying $t''\|f_{t''+1}\|_{\mu_X}^2\to 0$. Besides, since $F_T \in \mbox{lin}(\mathscr P)$, we know from (\ref{2star-algo2}) that $\mathds E \xi (F_{t''}(X),Y)F_{T}(X)\to 0$. Therefore, for all $\varepsilon >0$,
$${\lim\sup}_{t''\to \infty}\mathds E \xi(F_{t''}(X),Y)F_{t''}(X)\leq \frac{\varepsilon \nu}{2}.$$
Since $\varepsilon$ is arbitrary, we have just shown that
\begin{equation}
\label{BP1}
{\lim\sup}_{t''\to \infty} \mathds E \xi (F_{t''}(X),Y)F_{t''}(X)\leq 0,
\end{equation}
as desired.

Let $\varepsilon>0$ and let $F_{\varepsilon}^{\star} \in \mbox{lin}(\mathscr P)$ be such that
$$\inf_{F \in {\mbox{\footnotesize lin}(\mathscr P)}}C(F) \geq C(F^{\star}_{\varepsilon})-\varepsilon.$$
By the convexity of $C$, along $t''$,
\begin{align*}
\inf_{F \in {\mbox{\footnotesize lin}(\mathscr P)}}C(F)&\geq C(F_{\varepsilon}^{\star})-\varepsilon \\
& \geq \inf_k C(F_k)+\mathds E \xi(F_{t''}(X),Y)F_{\varepsilon}^{\star}(X)-\mathds E \xi(F_{t''}(X),Y)F_{t''}(X)-\varepsilon.
\end{align*}
Putting (\ref{2star-algo2}) and (\ref{BP1}) together, we conclude that
$$\lim_{t \to \infty} C(F_t)=\inf_{k} C(F_k) = \inf_{F \in {\mbox{\footnotesize lin}(\mathscr P)}}C(F).$$
\end{proof}

As in \hyperref[algorithm1]{Algorithm 1}, the sequence $(F_t)_t$ approaches $\bar F={\arg\min}_{F \in L^2(\mu_X)}C(F)$, provided $\overline{\mbox{lin}(\mathscr P)}=L^2(\mu_X)$ and 
${\bf A'_3}$ is satisfied in place of ${\bf A_3}$. This is summarized in the following Corollary. Its proof is similar to the proof of Corollary \ref{coro-convergenceCn} and is therefore omitted.
\begin{cor}
\label{coro-convergenceCn-algo2}
Assume that $\overline{\emph{lin}(\mathscr P)}=L^2(\mu_X)$. Assume, in addition, that Assumptions ${\bf A_1}$, ${\bf A_2}$, and ${\bf A'_3}$ are satisfied, and let $(F_t)_t$ be defined by \hyperref[algorithm2]{Algorithm 2} with  $0<\nu<1/(2L)$. Then
$$\lim_{t \to \infty}\|F_t-\bar F\|_{\mu_X}=0,$$
where 
$$\bar F= {\arg\min}_{F \in L^2(\mu_X)} C(F).$$
\end{cor}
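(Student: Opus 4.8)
The plan is to follow exactly the route used for Corollary~\ref{coro-convergenceCn}, combining the $\alpha$-strong convexity of $C$ (guaranteed by Assumption~${\bf A_2}$) with the risk convergence of Theorem~\ref{convergenceCn-algo2}. First I would record that, under Assumption~${\bf A_2}$, the functional $C$ is $\alpha$-strongly convex on $L^2(\mu_X)$, so that for every $t$,
$$C(F_t)\geq C(\bar F)+\mathds E\,\xi(\bar F,Y)(F_t-\bar F)+\frac{\alpha}{2}\|F_t-\bar F\|_{\mu_X}^2,$$
and, thanks to the smoothness Assumption~${\bf A'_3}$, the subgradient term may be rewritten in the familiar form $\langle \nabla C(\bar F),F_t-\bar F\rangle_{\mu_X}$, since in that case $\xi(x,y)=\partial_x\psi(x,y)$ and $C$ is differentiable with gradient $\nabla C$.

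Next I would use that $\bar F$ is the global minimizer of $C$ over all of $L^2(\mu_X)$. Indeed, because $\overline{\mbox{lin}(\mathscr P)}=L^2(\mu_X)$ and $C$ is continuous (a consequence of Assumption~${\bf A_1}$), one has $\inf_{F\in\mbox{lin}(\mathscr P)}C(F)=\inf_{F\in L^2(\mu_X)}C(F)$, and strong convexity ensures this value is attained at the unique point $\bar F$. First-order optimality on the Hilbert space $L^2(\mu_X)$ then forces $\nabla C(\bar F)=0$, whence $\langle \nabla C(\bar F),F_t-\bar F\rangle_{\mu_X}=0$ and
$$C(F_t)-C(\bar F)\geq \frac{\alpha}{2}\|F_t-\bar F\|_{\mu_X}^2.$$
Finally, Theorem~\ref{convergenceCn-algo2} gives $C(F_t)\to \inf_{F\in\mbox{lin}(\mathscr P)}C(F)=C(\bar F)$, so the left-hand side above tends to $0$, and therefore $\|F_t-\bar F\|_{\mu_X}\to 0$, which is the claim.

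I do not expect a genuine obstacle: the argument is a verbatim transcription of the proof of Corollary~\ref{coro-convergenceCn}, with $\mathscr F$ replaced by $\mathscr P$ and Theorem~\ref{convergenceCn} replaced by Theorem~\ref{convergenceCn-algo2}. The only points deserving a line of care are the identification of $\bar F$ as the minimizer of $C$ over the whole of $L^2(\mu_X)$ and the resulting vanishing of $\nabla C(\bar F)$; both rest squarely on the density hypothesis $\overline{\mbox{lin}(\mathscr P)}=L^2(\mu_X)$ together with the continuity and strong convexity of $C$ supplied by Assumptions~${\bf A_1}$ and ${\bf A_2}$. This is precisely why the proof is ``similar to the proof of Corollary~\ref{coro-convergenceCn} and is therefore omitted.''
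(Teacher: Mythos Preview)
Your proposal is correct and is precisely the argument the paper has in mind: it reproduces the proof of Corollary~\ref{coro-convergenceCn} verbatim, swapping $\mathscr F$ for $\mathscr P$ and invoking Theorem~\ref{convergenceCn-algo2} in place of Theorem~\ref{convergenceCn}, which is exactly why the paper omits the proof.
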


Theorem \ref{convergenceCn}/Corollary \ref{coro-convergenceCn} and Theorem \ref{convergenceCn-algo2}/Corollary \ref{coro-convergenceCn-algo2} guarantee that, under appropriate assumptions, \hyperref[algorithm1]{Algorithm 1} and \hyperref[algorithm2]{Algorithm 2} converge towards the infimum of the risk functional. Given the unusual form of these  algorithms, which have the flavor of gradient descents while being different, these results are all but obvious and cannot be deduced from general optimization principles. As far as we know, they are novel in the gradient boosting literature and extend our understanding of the approach. 

Perhaps the most natural framework of \hyperref[algorithm1]{Algorithm 1} and \hyperref[algorithm2]{Algorithm 2} is when $\mu_{X,Y}=\mu_n$, the empirical measure. In this statistical context, both algorithms track the infimum of the empirical risk functional $C_n(F)=\frac{1}{n}\sum_{i=1}^n \psi(F(X_i),Y_i)$
over the linear combinations of weak learners in $\mathscr F$ (\hyperref[algorithm1]{Algorithm 1}) or in $\mathscr P$ (\hyperref[algorithm2]{Algorithm 2}). This task is achieved by sequentially constructing linear combinations of base learners, of the form $F_t=F_0+\sum_{k=1}^t w_k f_k$  with $f_k \in \mathscr F$ for \hyperref[algorithm1]{Algorithm 1}, and $F_t=F_0+\nu \sum_{k=1}^t f_k$ with $f_k \in \mathscr P$ for \hyperref[algorithm2]{Algorithm 2}. We stress that, in the empirical case, the boosted iterates $F_t$ and their eventual limit $\bar F_n$ are measurable functions of the data set $\mathscr D_n$. That being said, Theorem \ref{convergenceCn} and Theorem \ref{convergenceCn-algo2} are numerical-analysis-type results, which do not provide information on the statistical properties of the boosting predictor $\bar F_n$. From this point of view, more or less catastrophic situations can happen, depending on the ``size'' of $\mbox{lin}(\mathscr F)$ (\hyperref[algorithm1]{Algorithm 1}) or $\mbox{lin}(\mathscr P)$ (\hyperref[algorithm2]{Algorithm 2}), which should not be neither too small (to catch complex decisions) nor excessively large (to avoid overfitting).

To be convinced of this, consider for example \hyperref[algorithm1]{Algorithm 1} with $\psi(x,y)=(y-x)^2$ (least squares regression problem) and $\mathscr F=$ all binary trees with $d+1$ leaves. Denote by $P_{n}$ the empirical measure based on the $X_i$ only, $1\leq i\leq n$. Then, by Theorem \ref{convergenceCn}, $\lim_{t \to \infty}C_n(F_t) =C_n(\bar F_n)$, where
$$\bar F_n= {\arg\min}_{F \in L^2(P_{n})} C_n(F).$$
Assume, to simplify, that all $X_i$ are different. It is then easy to see that the boosting predictor $\bar F_n$ takes the value $Y_i$ at each $X_i$ and is arbitrarily defined elsewhere. Of course, in general, such a function $\bar F_n$ does not converge as $n \to \infty$ towards the regression function $F^{\star}(x)=\mathds E(Y|X=x)$, and this is a typical situation where the gradient boosting algorithms overfit. The overfitting issue of boosting procedures has been recognized for a long time, and various approaches have been proposed to combat it, in particular via early stopping \citep[that is, stopping the iterations before convergence; see, e.g.,][]{BuYu03,MaMeZh03,ZhYu05,BiRiZa06,BaTr07}.

Nevertheless, the natural question we would like to answer is whether there exists a reasonable context in which the boosting predictors enjoy good statistical properties as the sample size grows, without resorting to any stopping strategy. The next section provides a positive response. The major constraint we face, imposed by the gradient-descent nature of the algorithms, is that we are required to perform a minimization over a vector space ($\mbox{lin}(\mathscr F)$ for \hyperref[algorithm1]{Algorithm 1} and $\mbox{lin}(\mathscr P)$ for \hyperref[algorithm2]{Algorithm 2}). In particular, there is no question of imposing constraints on the coefficients of the linear combinations, which, for example, cannot reasonably be assumed to be bounded. As we will see, the trick is to carefully constraint the ``complexity'' of the vector spaces $\mbox{lin}(\mathscr F)$ or $\mbox{lin}(\mathscr P)$ in a manner compatible with the algorithms. The second message is the importance of having a strongly convex functional risk to minimize, which, in some way, restrict the norm of the sequence $(F_t)_{t \geq 0}$ of boosted iterates. As we have pointed out several times, if the loss function is not natively strongly convex in its first argument, then this type of regularization can be achieved by resorting to an $L^2$-type penalty.
\section{Large sample properties}
\label{section4}
We consider in this section a functional minimization problem whose solution can be computed by gradient boosting and enjoys non-trivial statistical properties. The context and notation are similar to that of the previous sections, but must be slightly adapted to fit this new framework.

For simplicity, it will be assumed throughout that $\mathscr X$ is a compact subset of $\mathds R^d$. We consider i.i.d.~data $\mathscr D_n=\{(X_1,Y_1), \hdots, (X_n,Y_n)\}$ taking values in $\mathscr X \times \mathscr Y$, and let $P_n$ be the empirical measure based on the $X_i$ only, $1\leq i\leq n$. We denote by $P$ the common distribution of the $X_i$ and assume that $P$ has a density $g$ with respect to the Lebesgue measure $\lambda$ on $\mathds R^d$, with
$$0<\inf_{\mathscr X} g\leq \sup_{\mathscr X} g<\infty.$$
We concentrate on \hyperref[algorithm1]{Algorithm 1} and take as weak learners a finite class $\mathscr F_n$ of simple functions on $\mathscr X$ with $\pm 1$ values, which may possibly vary with the sample size $n$. It is actually easy to verify that all subsequent results are valid for \hyperref[algorithm2]{Algorithm 2} by letting $\mathscr P_n=\{\lambda f: f\in \mathscr F_n, \lambda \in \mathds R\}$.

The typical example we have in mind for $\mathscr F_n$ is a finite class of binary trees using axis parallel cuts with $k$ leaves. Of course, the parameter $k$ has to be carefully chosen as a function of the sample size to guarantee consistency, as we will see below. The fact that the class $\mathscr F_n$ is supposed to be finite should not be too disturbing, since in practice the optimization step (\ref{optimizationstep}) is typically performed over a finite family of functions. This is for example
the case when a CART-style top-down recursive partitioning is used to compute the minimum at each iteration of the algorithm. In this approach, the optimal tree in (\ref{optimizationstep}) is greedily searched for by passing from one level of node to the next one with cuts that are located between two data points. So, even though the collection $\mathscr F_n$ may be very large, it is nevertheless fair to assume that its cardinal is finite.

As before, it is assumed that the identically zero function belongs to $\mathscr F_n$. So, in this framework, we see that there exists a (large) integer $N=N(n)\geq 1$ and a partition of $\mathscr X$ into measurable subsets $A_j^n$, $1\leq j \leq N$, such that any $F\in \mbox{lin}(\mathscr F_n)$ takes the form
$F=\sum_{j=1}^N\alpha_j\mathds 1_{A_j^n}$, where $(\alpha_1, \hdots, \alpha_N)\in \mathds R^N$. To avoid pathological situations, we assume that there exists a positive sequence $(v_n)_{n}$ such that $\min_{1\leq j \leq N} \lambda(A_j^n) \geq v_n$. Of course, it is supposed that $N\to \infty$ as $n$ tends to infinity. 

We let $\phi:\mathds R\times \mathscr Y \to \mathds R_+$ be a loss function, assumed to be convex in its first argument and to satisfy $\bar \phi:=\sup_{y \in \mathscr Y}\phi(0,y)<\infty$. In line with the previous sections, we are interested in minimizing over $\mathscr F_n$ the empirical risk functional $C_n(F)$ defined by
$$C_{n}(F)=\frac{1}{n}\sum_{i=1}^n \psi(F(X_i),Y_i),$$
where $\psi(x,y)=\phi(x,y)+\gamma_n x^2$ and $(\gamma_n)_n$ is a sequence of positive parameters such that $\lim_{n \to \infty}\gamma_n=0$. (Note that $\gamma_n$ depends only on $n$ and is therefore kept fixed during the iterations of the algorithm.)
Put differently,
\begin{equation}
\label{An+P}
C_{n}(F)=A_{n}(F)+\gamma_n\|F\|^2_{P_n},
\end{equation}
where 
$$A_{n}(F)=\frac{1}{n}\sum_{i=1}^n \phi(F(X_i),Y_i).$$ 
Assumption ${\bf A_1}$ is obviously satisfied (with $\mu_{X,Y}=\mu_n$, in the notation of Section \ref{section3}), and the same is true for Assumption ${\bf A_2}$ by the $\alpha$-strong convexity of the function $\psi(\cdot, y)$ for each fixed $y$, with $\alpha$ independent of $y$.
\begin{rem}
\label{rem1}
If the function $\phi(\cdot, y)$ is natively $\alpha$-strongly convex with a parameter $\alpha$ independent of $y$, then we may consider the simplest problem of minimizing the functional $A_n(F)$. Indeed, in this case there is no need to resort to the $\gamma_n\|F\|^2_{P_n}$ penalty term since Lemma \ref{technical-3} allows to bound $\|F\|^2_{P_n}$. As we have seen in Section \ref{section2}, this is for example the case in the least squares problem, when $\phi(x,y)=(y-x)^2$. However, to keep a sufficient degree of generality, we will consider in the following the more general optimization problem (\ref{An+P}). 
\end{rem}
Now, let
$$\bar F_n ={\arg\min}_{F \in{\mbox{\footnotesize lin}(\mathscr F_n)}}C_n(F).$$
We have learned in Theorem \ref{convergenceCn} that whenever Assumption ${\bf A_3}$ is satisfied, the boosted iterates $(F_t)_t$ of \hyperref[algorithm1]{Algorithm 1} satisfy $\lim_{t\to \infty} C_n(F_t)=C_n(\bar F_n)$, i.e.,
$$\lim_{t\to \infty} \big(A_{n}(F_t)+\gamma_n \|F_t\|^2_{P_n}\big)=A_{n}(\bar F_{n})+\gamma_n\|\bar F_{n}\|_{P_n}^2.$$
For $F\in L^2(P)$, the population counterpart of $A_n(F)$ is the convex functional $A(F):=\mathds E\phi(F(X_1),Y_1)$, which is assumed to be locally bounded, and thus continuous. Throughout, we denote by $F^{\star}$ a minimizer of $A(F)$ over $L^2(P)$, i.e.,
$$
F^{\star} \in {\arg\min}_{F \in L^2(P)} A(F).
$$
We have for example $F^{\star}(x)=\mathds E(Y|X=x)$ in the regression problem with $\phi(x,y)=(y-x)^2$ and $F^{\star}(x)=\log (\frac{\eta(x)}{1-\eta(x)})$ in the classification problem with $\phi(x,y)=\log_2(1+e^{-yx})$, where $\eta(x)=\mathds P(Y=1|X=x)$. 

Our goal in this section is to investigate the large sample properties of $\bar F_{n}$, i.e., to analyze the statistical behavior of the boosting predictor $\bar F_{n}$ as $n\to \infty$. In particular, a sensible objective is to show that $A(\bar F_{n})$ gets asymptotically close to the minimal risk $A(F^{\star})$ as the sample size grows. This necessitates a proof, since all we know for now is that 
$$A_{n}(\bar F_{n})+\gamma_n \|\bar F_n\|^2_{P_n}-A({F^\star}) = \inf_{F \in \mbox{\footnotesize lin}(\mathscr F_n)}\big(A_{n}(F)+\gamma_n \|F\|^2_{P_n}-A({F^\star})\big),$$
which is our starting point. The following assumption on $\phi$ will be needed in the analysis:
\begin{enumerate}
\item[${\bf A_4}$] For all $p\geq 0$, there exists a constant $\zeta(p)> 0$ such that, for all $(x_1,x_2,y) \in \mathds R^2\times \mathscr Y$ with $\max(|x_1|,|x_2|)\leq p$,
$$|\phi(x_1,y)-\phi(x_2,y)| \leq \zeta(p) |x_1-x_2|.$$
\end{enumerate}
It is readily seen that all classical convex losses in regression and classification satisfy this local Lipschitz assumption. Finally, we let $A^n(x)=A_j^n$ whenever $x\in A_j^n$, and, for $E\subset \mathds R^d$,
$$\mbox{diam}(E)=\sup_{x,x' \in E}\|x-x'\|.$$
Recall that $\bar \phi:=\sup_{y \in \mathscr Y}\phi(0,y)<\infty$. 
\begin{theo}
\label{theorem-stat-consistency}
Assume that Assumptions ${\bf A_3}$ (with $\psi(x,y)=\phi(x,y)+\gamma_nx^2$) and ${\bf A_4}$ are satisfied, and that $F^{\star}$ is bounded. Assume, in addition, that $\emph{diam}(A^n(X))\to 0$ in probability as $n\to \infty$. Then, provided $\gamma_n\to0$, $N\to \infty$, $\frac{\log N}{nv_n}\to 0$, and
$$
\frac{1}{\sqrt{nv_n\gamma_n}}\zeta \bigg(\sqrt{\frac{2\bar \phi}{v_n \gamma_n\inf_{\mathscr X} g}}\bigg) \to 0,
$$
we have
$\lim_{n \to \infty}\mathds E A(\bar F_n) =A(F^{\star})$.
\end{theo}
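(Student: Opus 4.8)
The plan is to prove the one-sided estimate $\limsup_{n\to\infty}\mathds E A(\bar F_n)\le A(F^\star)$, which suffices since $\bar F_n\in L^2(P)$ and $F^\star$ minimizes $A$ over $L^2(P)$, so that $\mathds E A(\bar F_n)\ge A(F^\star)$ always. I would introduce a \emph{deterministic} approximant $\tilde F_n\in\mbox{lin}(\mathscr F_n)$, namely the conditional expectation of $F^\star$ with respect to the partition $\{A_j^n\}_{1\le j\le N}$; it is piecewise constant on the $A_j^n$ (hence in $\mbox{lin}(\mathscr F_n)$) and satisfies $\|\tilde F_n\|_\infty\le\|F^\star\|_\infty=:p$. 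Then I would split
\begin{align*}
A(\bar F_n)-A(F^\star)&\le\big[A(\bar F_n)-A_n(\bar F_n)\big]+\big[A_n(\bar F_n)-A_n(\tilde F_n)\big]\\
&\quad+\big[A_n(\tilde F_n)-A(\tilde F_n)\big]+\big[A(\tilde F_n)-A(F^\star)\big],
\end{align*}
and observe that the minimality of $\bar F_n$ for $C_n=A_n+\gamma_n\|\cdot\|^2_{P_n}$ gives $A_n(\bar F_n)-A_n(\tilde F_n)\le\gamma_n\|\tilde F_n\|^2_{P_n}\le\gamma_n p^2\to0$, so the penalty enters only favorably here. It then remains to control the three remaining brackets in expectation.

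For the approximation bracket I would show $\|\tilde F_n-F^\star\|_{L^2(P)}\to0$ by the usual density argument: approximating $F^\star$ in $L^2(P)$ by a continuous function $g$ truncated at level $p$ (so $\|g\|_\infty\le p$), using that conditional expectation is an $L^2(P)$-contraction and that, by uniform continuity of $g$ on the compact $\mathscr X$, the conditional expectation of $g$ differs from $g$ pointwise by at most the modulus of continuity of $g$ evaluated at $\mbox{diam}(A^n(X))$, which tends to $0$ in probability and is bounded, hence to $0$ in $L^2(P)$ by bounded convergence. Assumption ${\bf A_4}$ with the fixed radius $p$ (this is where boundedness of $F^\star$ is used) then yields $A(\tilde F_n)-A(F^\star)\le\zeta(p)\,\|\tilde F_n-F^\star\|_{L^2(P)}\to0$. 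The bracket $A_n(\tilde F_n)-A(\tilde F_n)$ is a centered average of i.i.d.\ terms bounded by $\bar\phi+\zeta(p)p$ (since $\tilde F_n$ is non-random), so its $L^1$ norm is $O(n^{-1/2})\to0$.

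The heart of the matter is the estimation bracket $\mathds E\big[A(\bar F_n)-A_n(\bar F_n)\big]$. From $C_n(\bar F_n)\le C_n(0)=A_n(0)\le\bar\phi$ I get the deterministic bounds $\gamma_n\|\bar F_n\|^2_{P_n}\le\bar\phi$ and, by inspecting the cell-by-cell minimization, $\|\bar F_n\|_\infty\le(\bar\phi/\gamma_n)^{1/2}$. Let $\Omega_n=\bigcap_{j=1}^N\{P_n(A_j^n)\ge\tfrac12 P(A_j^n)\}$; since $P(A_j^n)\ge v_n\inf_{\mathscr X}g$, a Chernoff bound and a union over the $N$ cells give $\mathds P(\Omega_n^c)\le N\exp(-c\,nv_n)\to0$, the limit using $\log N/(nv_n)\to0$. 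On $\Omega_n$, the constraint $\gamma_n\|\bar F_n\|^2_{P_n}\le\bar\phi$ upgrades to $\|\bar F_n\|_\infty\le\rho_n:=\big(2\bar\phi/(v_n\gamma_n\inf_{\mathscr X}g)\big)^{1/2}$ and $\|\bar F_n\|^2_{L^2(P)}\le2\bar\phi/\gamma_n$, so $\bar F_n$ lies in the \emph{non-random} class $\mathscr G_n=\{F\in\mbox{lin}(\mathscr F_n):\ \|F\|_{L^2(P)}\le(2\bar\phi/\gamma_n)^{1/2},\ \|F\|_\infty\le\rho_n\}$, whence $\mathds E[\mathds 1_{\Omega_n}(A(\bar F_n)-A_n(\bar F_n))]\le\mathds E\sup_{F\in\mathscr G_n}(A(F)-A_n(F))$. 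I would bound the latter by symmetrization, then the contraction principle for Rademacher averages (the maps $u\mapsto\phi(u,Y_i)$ are $\zeta(\rho_n)$-Lipschitz on $[-\rho_n,\rho_n]$, which contains all values $F(X_i)$ for $F\in\mathscr G_n$), and finally the Rademacher complexity of the $L^2(P)$-ellipsoid of piecewise-constant functions: for $F=\sum_j\alpha_j\mathds 1_{A_j^n}$ with $\sum_j\alpha_j^2 P(A_j^n)\le2\bar\phi/\gamma_n$, a Cauchy--Schwarz step gives $\mathds E_\sigma\sup_F\frac1n\sum_i\sigma_i F(X_i)\le n^{-1}(2\bar\phi/\gamma_n)^{1/2}\big(\sum_j n_j/P(A_j^n)\big)^{1/2}$ whose expectation is at most $(2\bar\phi N/(n\gamma_n))^{1/2}$. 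Using $Nv_n\le\lambda(\mathscr X)$ this yields $\mathds E\sup_{F\in\mathscr G_n}(A(F)-A_n(F))\le\mbox{const}\cdot\zeta(\rho_n)/\sqrt{nv_n\gamma_n}\to0$ by the last hypothesis. On $\Omega_n^c$ I would use the crude deterministic bound $A(\bar F_n)\le\bar\phi+\zeta(\rho_n)\rho_n$ together with $\mathds P(\Omega_n^c)\le N\exp(-c\,nv_n)$ and $\log N/(nv_n)\to0$ to check $\mathds E[\mathds 1_{\Omega_n^c}A(\bar F_n)]\to0$.

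Combining the four brackets gives $\limsup_n\mathds E A(\bar F_n)\le A(F^\star)$, hence the claim. The hard part is the estimation bracket of the third step, and specifically matching the rate $\zeta(\rho_n)/\sqrt{nv_n\gamma_n}$: a naive uniform deviation bound over $\mbox{lin}(\mathscr F_n)$ restricted only to a sup-norm ball $\{\|F\|_\infty\le\rho_n\}$ is too lossy and does not close under the stated assumptions. The key idea is that the $L^2$ penalty $\gamma_n\|\cdot\|^2_{P_n}$ confines $\bar F_n$ to an \emph{ellipsoid} whose Rademacher complexity is of order $\sqrt{N/(n\gamma_n)}\le\mbox{const}/\sqrt{nv_n\gamma_n}$---much smaller than that of the corresponding box---so that strong convexity of the regularized loss is exactly what forces the estimation error to vanish; a secondary nuisance is passing from the empirical geometry $\|\cdot\|_{P_n}$ in which $\bar F_n$ naturally sits to the population geometry $\|\cdot\|_{L^2(P)}$ needed for the complexity computation, handled on the event $\Omega_n$ at the cost of the factor $1/(v_n\inf_{\mathscr X}g)$ appearing inside $\rho_n$. (Assumption ${\bf A_3}$ for $\psi=\phi+\gamma_n x^2$ is what ties $\bar F_n$ to the output of Algorithm~1 via Theorem~\ref{convergenceCn}; the statistical bound above does not otherwise require it.)
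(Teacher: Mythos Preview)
Your strategy matches the paper's: both localize $\bar F_n$ via the event $\Omega_n=S=\{\forall j:\,P_n(A_j^n)\ge P(A_j^n)/2\}$, obtain a deterministic ball containing the coefficients of $\bar F_n$ on that event, control a uniform deviation over that ball, and finish with an approximation of $F^\star$ by a piecewise-constant function. The execution differs in three places. First, the paper bounds the deviation $\sup_{\beta\in T}|A_n(F_\beta)-A(F_\beta)|$ via subgaussian increments and Dudley's entropy integral over the Euclidean ball $T=\{\beta:\sum_j\beta_j^2\le 2\bar\phi/(v_n\gamma_n\inf g)\}$, whereas you symmetrize, apply Rademacher contraction, and bound the Rademacher width of the $L^2(P)$-ellipsoid by a direct Cauchy--Schwarz step; your route is a bit more elementary and lands on the same rate $\zeta(\rho_n)/\sqrt{nv_n\gamma_n}$. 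Second, for the approximation the paper mollifies $F^\star$ and then discretizes (Lemma~\ref{approxlemma}), while you use the conditional expectation $\tilde F_n=\mathds E[F^\star\mid\sigma(A_j^n)]$; both yield $\|\tilde F_n-F^\star\|_{L^2(P)}\to0$ under the diameter assumption. Third, the paper first bounds $\mathds E C_n(\bar F_n)$ and then transfers to $\mathds E A(\bar F_n)$, whereas your four-bracket decomposition works directly with $A$ and uses the penalty only through $C_n(\bar F_n)\le C_n(\tilde F_n)$.

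One small slip to tighten: on $\Omega_n^c$ you invoke $A(\bar F_n)\le \bar\phi+\zeta(\rho_n)\rho_n$, but $\rho_n$ is the bound valid \emph{on} $\Omega_n$. Off $\Omega_n$ you only have the cell-by-cell bound $\|\bar F_n\|_\infty\le(\bar\phi/\gamma_n)^{1/2}$ that you already derived. Using that instead gives $A(\bar F_n)\le \bar\phi+\zeta\big((\bar\phi/\gamma_n)^{1/2}\big)(\bar\phi/\gamma_n)^{1/2}$, and since $(\bar\phi/\gamma_n)^{1/2}\le\rho_n$ and $\zeta$ is nondecreasing, this is $o(\sqrt{nv_n\gamma_n})\cdot O(\gamma_n^{-1/2})=o(\sqrt{nv_n})$. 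Combined with $\mathds P(\Omega_n^c)\le e^{-c'nv_n}$ (for $n$ large, using $\log N=o(nv_n)$) and $nv_n\to\infty$, the product $o(\sqrt{nv_n})e^{-c'nv_n}$ tends to $0$. As written with $\rho_n$ the product is only $o(\sqrt n)\,e^{-c'nv_n}$, which need not vanish under the stated hypotheses alone.
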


The main message of this theorem is that, under appropriate conditions on the loss and provided the size of the weak learner classes are judiciously increased, gradient boosting does not overfit. In other words, in this framework, stopping the iterations is not necessary and the algorithms may be run indefinitely, without worrying about early stopping issues.

In line with Remark \ref{rem1}, we leave it as an exercise to prove that if the function $\phi(\cdot, y)$ is already $\alpha$-strongly convex with a parameter $\alpha$ independent of $y$, then a similar result holds with the conditions $N\to \infty$, $\frac{\log N}{nv_n}\to 0$, and
$$
\frac{1}{\sqrt{nv_n}}\zeta \bigg(\sqrt{\frac{a}{v_n \inf_{\mathscr X} g}}\bigg) \to 0,
$$
where $a=\frac{2}{\alpha}\sup_{y \in \mathscr Y} |\xi(0,y)|+\sqrt{2\bar \phi / \alpha}$. 
In this case, we can take $\gamma_n=0$ (i.e., no penalty) and resort to Lemma \ref{technical-3} to bound the quantity $\|F\|^2_{P_n}$. 

Next, we point out that the conditions of Theorem \ref{theorem-stat-consistency} are mild and cover a wide variety of losses and possible classes of weak learners. As an example, let $\mathscr X=[0,1]^d$ and take for $\mathscr F_n$ the set of all binary trees on $[0,1]^d$ with $k_n$ leaves, where cuts are perpendicular to the axes and located at the middle of the cells. Although combinatorially rich, this family of trees is finite (see Figure \ref{figure1} for an illustration in dimension $d=2$). 
\begin{figure}[!h]
\begin{center}
\includegraphics[scale=0.34]{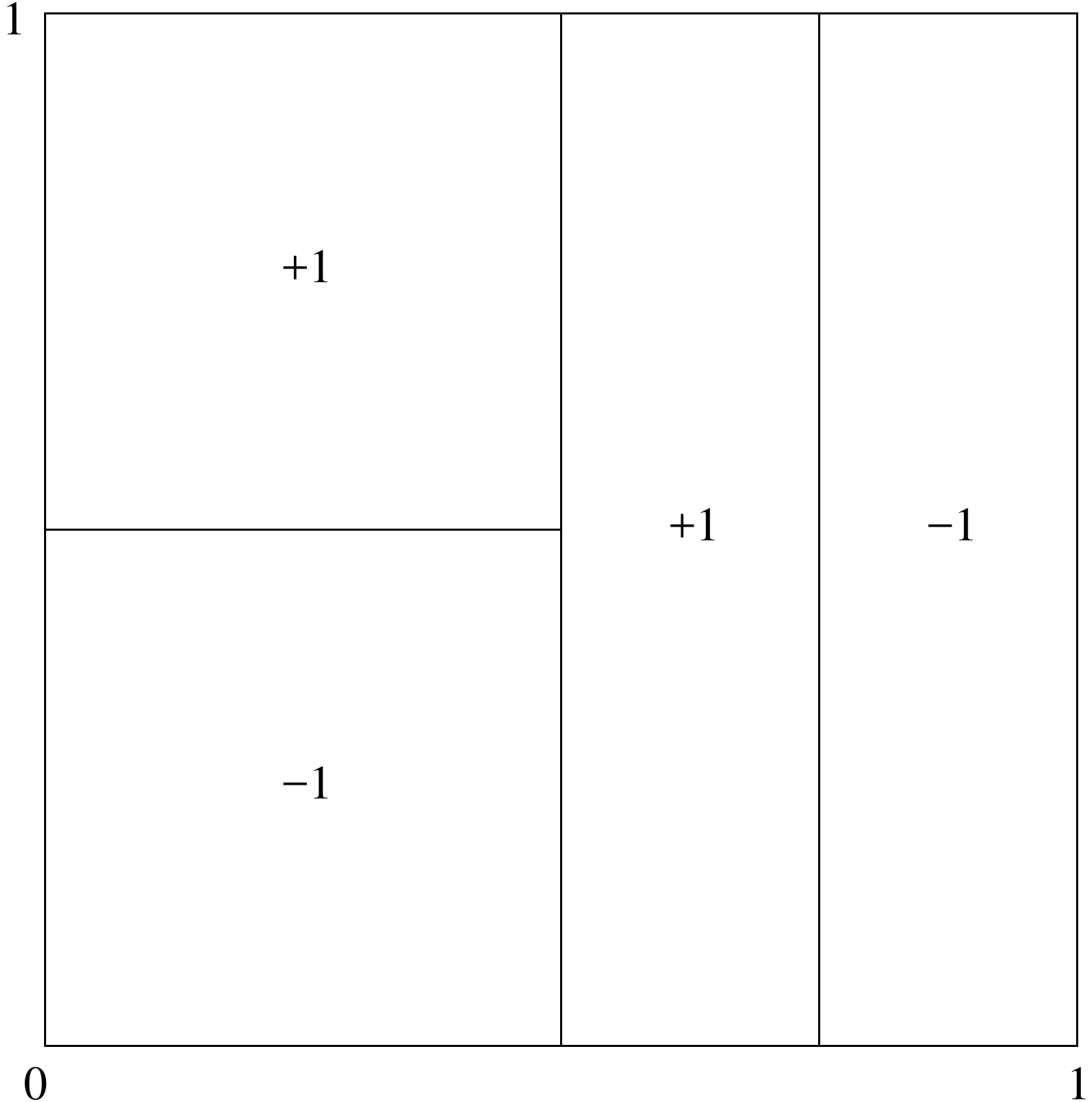}
\includegraphics[scale=0.34]{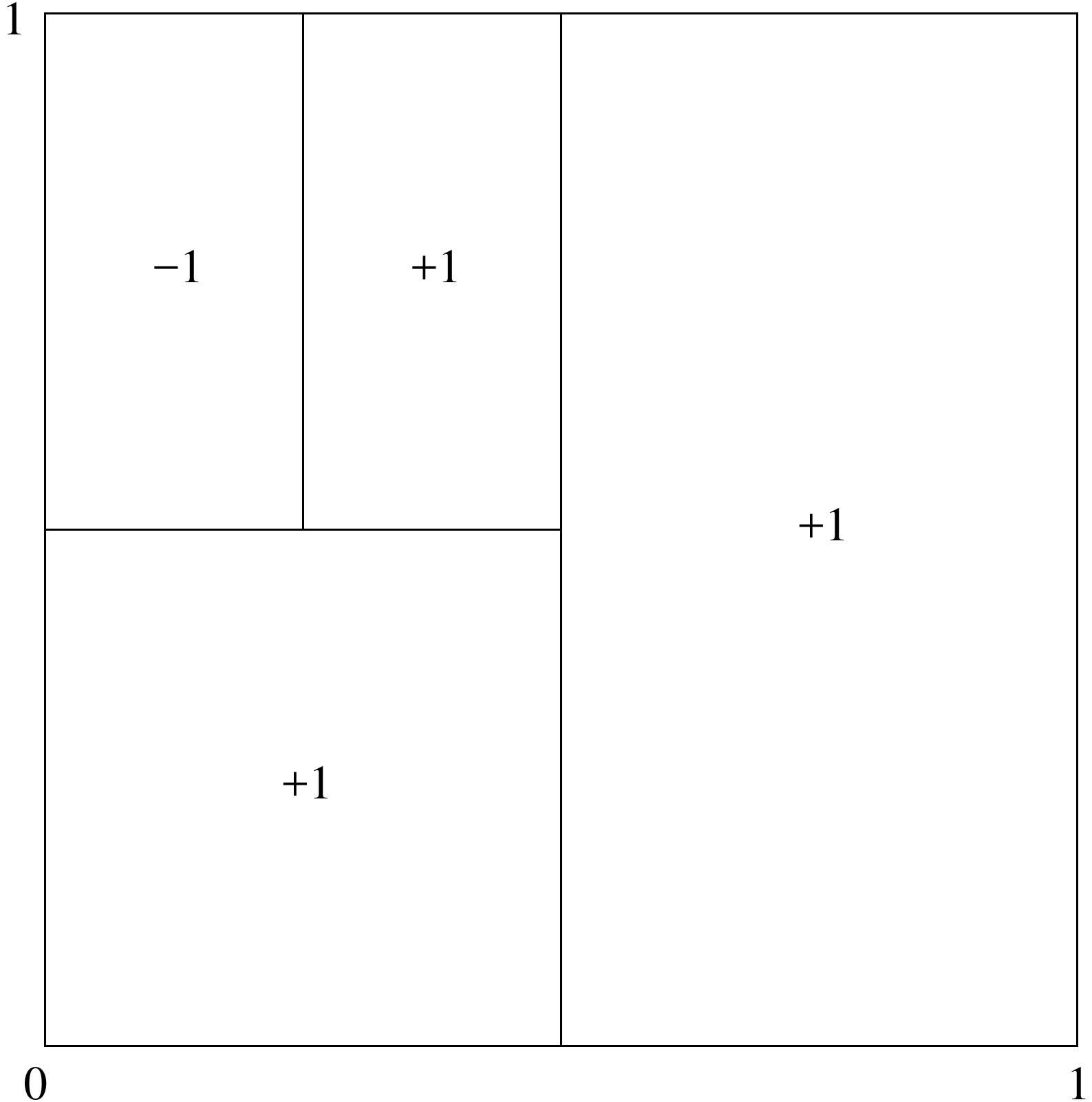}
\includegraphics[scale=0.34]{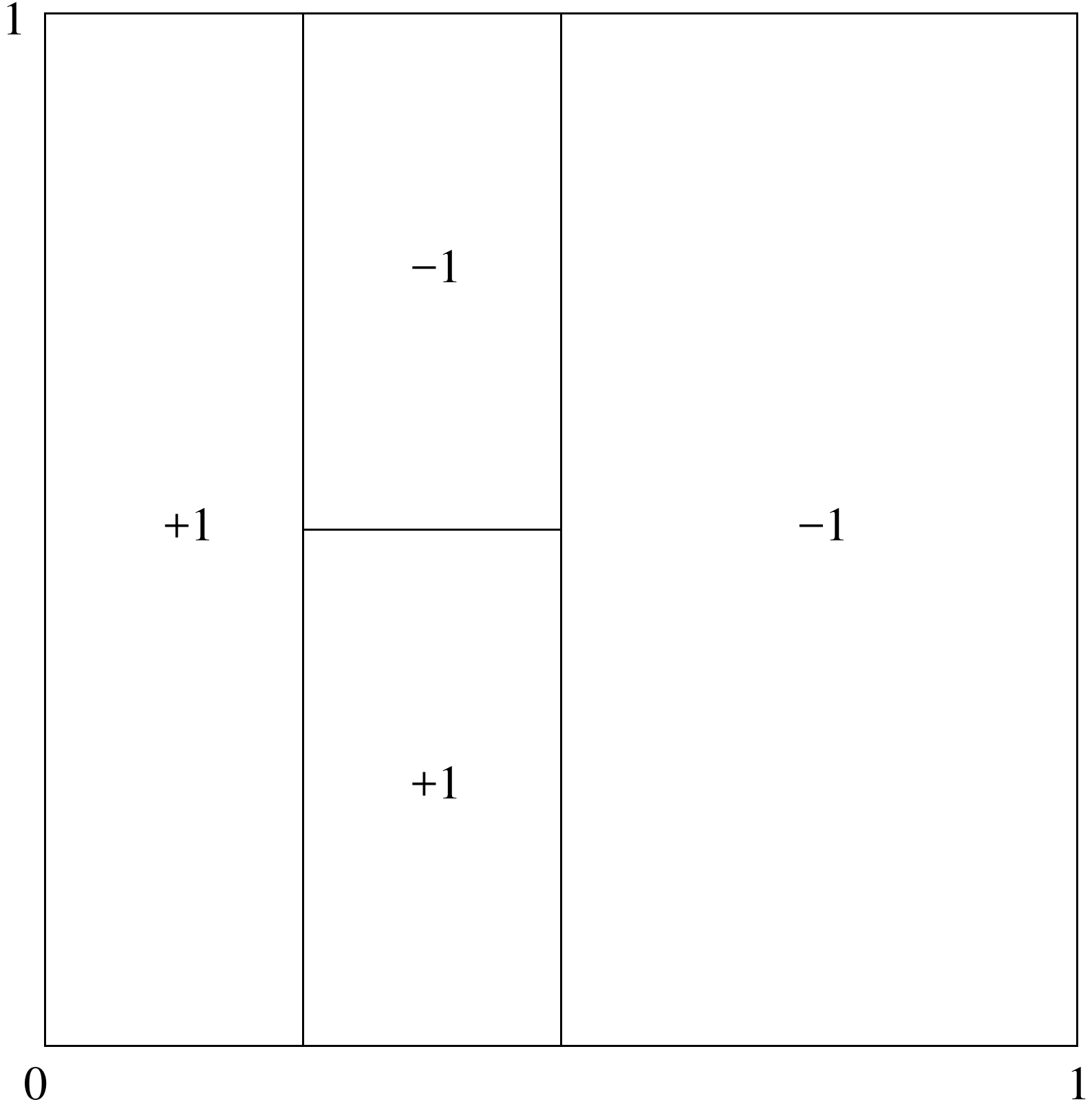}
\includegraphics[scale=0.34]{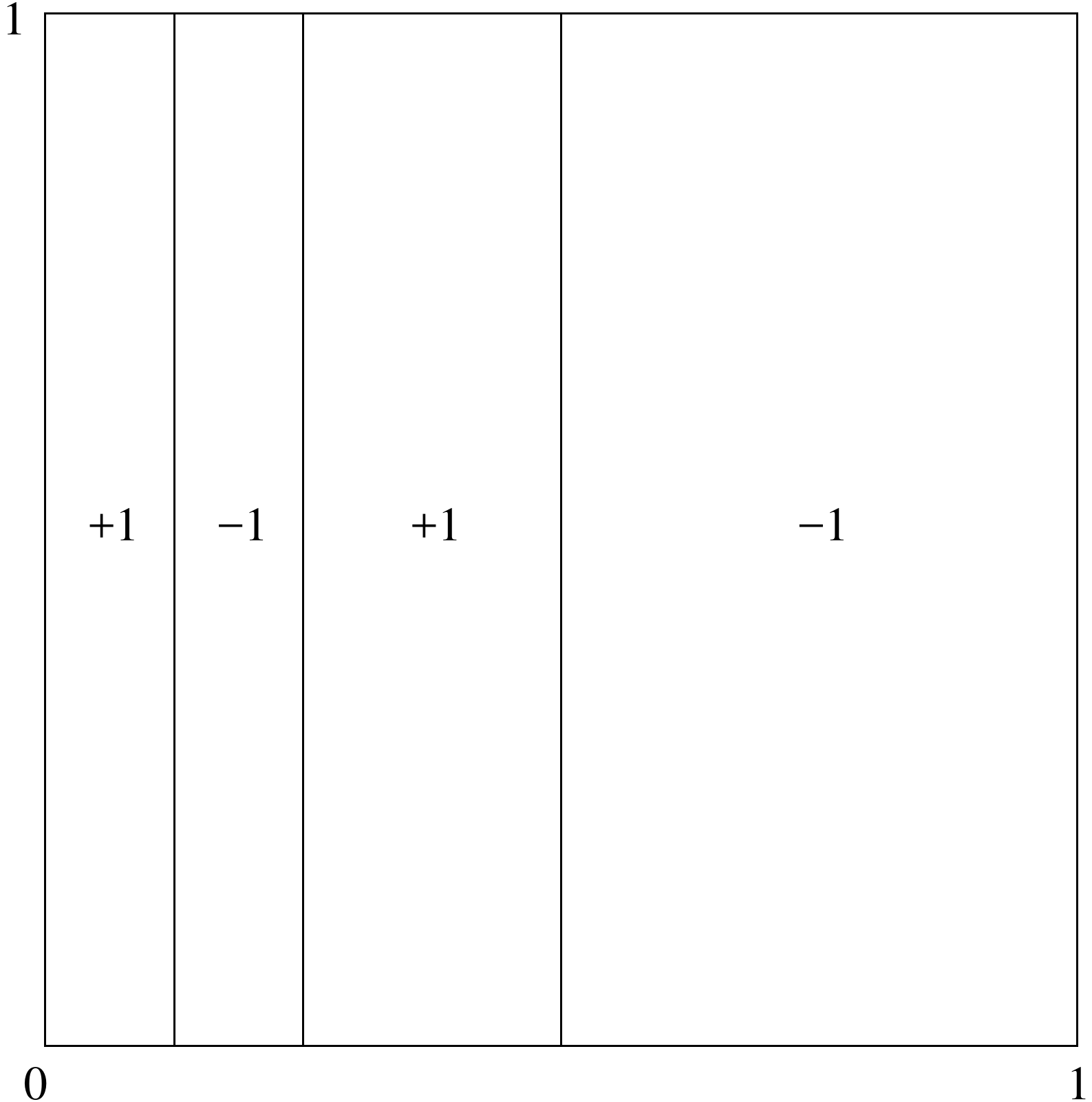}
\end{center}
\caption{Four examples of trees in the class $\mathscr F_n$, in dimension $d=2$, with $k_n=4$.}\label{figure1}
\end{figure}

It is easy to verify that any $F\in \mbox{lin}(\mathscr F_n)$ takes the form $F=\sum_{j=1}^N\alpha_j\mathds 1_{A_j^n}$, where $N= 2^{dk_n}$ and the $A_j^n$, $1\leq j \leq N$, form a regular grid over $[0,1]^d$. Thus, clearly, $v_n= 2^{-dk_n}$. In addition, considering for example the loss $\phi(x,y)=(y-x)^2$, we see that the conditions of Theorem \ref{theorem-stat-consistency} take the simple form 
$$k_n\to \infty, \quad \frac{k_n2^{dk_n}}{n}\to 0, \quad \mbox{and}\quad \frac{2^{dk_n}}{\sqrt n}\to 0.$$
Let us finally note that in the $\pm 1$-classification setting, each $F$ defines a classifier $g_F$ in a natural way, by
\begin{equation*} 
g_F(x) = \left\{
\begin{array}{ll}
1 & \mbox{if $F(x)>0$}\\
-1 & \mbox{otherwise,}
\end{array}
\right.
\end{equation*} 
and the main concern is not the behavior of the theoretical risk $A(F)$ with respect to $A(F^{\star})$, but rather the proximity between the probability of error $L(g_F):=\mathds P(g_F(X)\neq Y)$ and the Bayes risk $L^{\star}:=\inf_{g:\mathscr X\to \{-1,1\}} \mathds P(g(X)\neq Y)$. For most classification losses \citep{Zh04,BaJoAu06}, the difference $L(g_F)-L^{\star}$ is small as long as $A(F) -A(F^{\star})$ is. In our framework, we conclude that for such well-behaved losses, under the assumptions of Theorem \ref{theorem-stat-consistency}, 
$$\lim_{n \to \infty}\mathds EL(g_{\bar F_{n}}) = L^{\star}.$$
\begin{proof}[Proof of Theorem \ref{theorem-stat-consistency}]
For $\beta \in \mathds R^N$, we let $F_{\beta}=\sum_{j=1}^N \beta_j\mathds 1_{A_j^n}$ and notice that $\bar F_n=F_{\alpha}$ for some (data-dependent) $\alpha \in \mathds R^N$. Let the event $S$ be defined by
$$S=\big\{\forall j=1, \hdots, N: P_n(A_j^n)\geq P(A_j^n)/2\big\}.$$
Observe that
$$\|\bar F_n\|^2_{P_n} \leq \frac{\frac{1}{n}\sum_{i=1}^n\phi(0,Y_i)}{\gamma_n}\leq \frac{\bar \phi}{\gamma_n},$$
and, similarly, that
$$\|\bar F_n\|^2_{P_n}= \sum_{j=1}^N\alpha_j^2P_n(A_j^n).$$
Therefore, on $S$, 
$$\frac{1}{2}\sum_{j=1}^N \alpha_j^2P(A_j^n)\leq \frac{\bar \phi}{\gamma_n},$$ 
and so
$$\frac{\inf_{\mathscr X} g}{2} \cdot v_n \sum_{j=1}^N \alpha_j^2 \leq \frac{\bar \phi}{\gamma_n}.$$
We have just shown that, on the event $S$, $\alpha \in T$, 
where
$$T=\Big\{ \beta \in \mathds R^N:\sum_{j=1}^N\beta_j^2\leq \frac{2\bar \phi}{\inf_{\mathscr X}g}\cdot \frac{1}{v_n\gamma_n}\Big\}.$$
Now, observe that
\begin{align*}
\mathds EC_n(\bar F_n)&= \mathds E\inf_{F \in \mbox{\footnotesize lin}(\mathscr F_n)}C_n(F) \\
&= \mathds E\inf_{F \in \mbox{\footnotesize lin}(\mathscr F_n)}C_n(F)\mathds 1_S+ \mathds E\inf_{F \in \mbox{\footnotesize lin}(\mathscr F_n)}C_n(F)\mathds 1_{S^c}\\
&\leq \mathds E\inf_{F \in \mbox{\footnotesize lin}(\mathscr F_n)}C_n(F)\mathds 1_S+\mathds EC_n(0)\mathds 1_{S^c}\\
& = \mathds E \inf_{\beta \in T} C_n(F_{\beta})\mathds 1_S+\mathds EA_n(0)\mathds 1_{S^c}\\
& \leq \mathds E \inf_{\beta \in T} C_n(F_{\beta})+\bar \phi \mathds P(S^c).
\end{align*}
Define 
$$D_n(F)=A(F)+\gamma_n\|F\|^2_{P_n}.$$
Since $C_n(F)-D_n(F)=A_n(F)-A(F)$, we deduce from Lemma \ref{VCfinite} and Lemma \ref{estimationlemma} that whenever 
$$\frac{\log N}{nv_n}\to 0 \quad \mbox{and} \quad \frac{1}{\sqrt{nv_n\gamma_n}}\zeta \bigg(\sqrt{\frac{2\bar \phi}{v_n \gamma_n\inf_{\mathscr X} g}}\bigg) \to 0,$$
we have 
\begin{align}
\limsup_{n \to \infty} \mathds EC_n(\bar F_n) &\leq \limsup_{n \to \infty} \mathds E \inf_{\beta \in T} C_n(F_{\beta})\nonumber\\
& \leq \limsup_{n \to \infty} \mathds E \inf_{\beta \in T} D_n(F_{\beta}) +\limsup_{n \to \infty}  \big(\mathds E \sup_{\beta \in T} |A_n(F_{\beta})-A(F_{\beta})|\big)\nonumber\\
&=\limsup_{n \to \infty} \mathds E \inf_{\beta \in T} D_n(F_{\beta}).\label{BLV}
\end{align}
Let $\varepsilon>0$. By Lemma \ref{approxlemma}, there exists $(\beta_1^{\varepsilon}, \hdots, \beta_N^{\varepsilon}) \in T$ such that
$$\big\|F^{\star}-\sum_{j=1}^N\beta_j^{\varepsilon} \mathds 1_{A_j^n}\big\|_{P}\leq \varepsilon.$$
Define $F_{\varepsilon}^{\star}=\sum_{j=1}^N\beta_j^{\varepsilon}\mathds 1_{A_j^n}$. Then, according to (\ref{BLV}),
\begin{align}
\limsup_{n \to \infty} \mathds E C_n(\bar F_n) & \leq \limsup_{n \to \infty}\big( A(F_{\varepsilon}^{\star})+\gamma_n \mathds E \|F_{\varepsilon}^{\star}\|^2_{P_n}\big)\nonumber\\
&=  \limsup_{n \to \infty}\big( A(F_{\varepsilon}^{\star})+\gamma_n \mathds \|F_{\varepsilon}^{\star}\|^2_{P}\big)\nonumber\\
& \leq A(F^{\star}_{\varepsilon}).\label{DD14J}
\end{align}
Since $A$ is continuous, we conclude that $\limsup_{n \to \infty} \mathds E C_n (\bar F_n)\leq A(F^{\star})$.

On the other hand, $C_n(\bar F_n)\geq A_n(\bar F_n)$, and, by Lemma \ref{VCfinite} and Lemma \ref{estimationlemma},
\begin{align*}
\mathds E |A_n(\bar F_n)-A(\bar F_n)| &\leq \mathds E \sup_{\beta \in T} |A_n(F_{\beta})-A(F_{\beta})|+\bar \phi  \mathds P(S^c) \\
&\to 0 \quad \mbox{as }n \to \infty.
\end{align*}
Therefore,
$$\limsup_{n \to \infty} \mathds E A(\bar F_n) \leq \limsup_{n \to \infty}C_n(\bar F_n).$$
So, with (\ref{DD14J}),
$$\limsup_{n \to \infty} \mathds E A(\bar F_n) \leq A(F^{\star}),$$
which is the desired result.
\end{proof}
\section{Some technical lemmas}
\begin{lem}
\label{technical-1}
Assume that Assumptions ${\bf A_1}$ and ${\bf A_3}$ are satisfied. Then, for all $a>0$ and all $F,G \in L^2(\mu_X)$, 
$$C(F)-C(F+aG) \geq -a^2L\|G\|_{\mu_X}^2-a\mathds E \xi (F(X),Y)G(X).$$
\begin{proof}
By inequality (\ref{sub-zero}),
\begin{align*}
C(F)&\geq C(F+aG)-a\mathds E \xi (F(X)+aG(X),Y)G(X)\\
&=C(F+aG)-a\mathds E (\xi(F(X)+aG(X),Y)-\xi(F(X),Y))G(X)\\
& \quad-a\mathds E \xi(F(X),Y)G(X)\\
& = C(F+aG)-a\mathds E\mathds E (\xi(F(X)+aG(X),Y)-\xi(F(X),Y)\,|\,X)G(X)\\
& \quad-a\mathds E \xi(F(X),Y)G(X)\\
&\geq C(F+aG)-a\big(\mathds E\mathds E^2 (\xi(F(X)+aG(X),Y)-\xi(F(X),Y)\,|\,X)\big)^{1/2}\|G\|_{\mu_X}\\
& \quad-a\mathds E \xi(F(X),Y)G(X)\\
& \quad \mbox{(by the Cauchy-Schwarz inequality).}
\end{align*}
Thus, by Assumption ${\bf A_3}$, 
$$C(F)\geq C(F+aG) -a^2L\|G\|_{\mu_X}^2-a\mathds E \xi (F(X),Y)G(X).$$
\end{proof}
\begin{lem}
\label{technical-2}
Assume that Assumption ${\bf A_1}$ is satisfied, and let $(F_t)_t$ be defined by \hyperref[algorithm1]{Algorithm 1} with $(w_t)_t$ as in (\ref{choixW}). If, for some $t_0\geq 0$,
$$\mathds E \xi (F_{t_0}(X),Y)f(X)=0 \quad \mbox{for all } f\in \mathscr F,$$
then $C(F_{t_0})=\inf_{F \in \emph{\footnotesize lin}(\mathscr F)}C(F)$.
\end{lem}
\begin{proof}
Fix $t_0\geq 0$ and assume that $\mathds E \xi (F_{t_0}(X),Y)f(X)=0$ for all $f\in \mathscr F$. By linearity, $\mathds E \xi (F_{t_0}(X),Y)G(X)=0$ for all $G\in \mbox{lin}(\mathscr F)$. Let $\varepsilon>0$ and let $F_{\varepsilon}^{\star} \in \mbox{lin}(\mathscr F)$ be such that
$$\inf_{F \in \mbox{\footnotesize lin}(\mathscr F)}C(F)\geq C(F_{\varepsilon}^{\star})-\varepsilon.$$
By the convexity inequality (\ref{sub-zero}),
$$
C(F_{\varepsilon}^{\star})\geq C(F_0)+\mathds E \xi (F_{t_0}(X),Y)(F_{\varepsilon}^{\star}(X)-F_{t_0}(X))=C(F_{t_0}).
$$
Thus,
$$\inf_{F \in \mbox{\footnotesize lin}(\mathscr F)}C(F)\geq C(F_{t_0})-\varepsilon.$$
Since $\varepsilon$ is arbitrary, the result follows.
\end{proof}
\end{lem}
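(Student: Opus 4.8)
The plan is to read the hypothesis as a first-order stationarity condition for the convex functional $C$ at the point $F_{t_0}$, and then to convert this local condition into global optimality over $\mbox{lin}(\mathscr F)$ by invoking the subgradient inequality (\ref{sub-zero}). The whole argument is short because convexity lets a single subgradient inequality carry the optimality verification globally.

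First I would promote the orthogonality from the generators to the whole span. The map $G \mapsto \mathds E \xi(F_{t_0}(X),Y)G(X)$ is linear in $G$, and by assumption it vanishes on every $f \in \mathscr F$; hence it vanishes on every finite linear combination, that is, $\mathds E \xi(F_{t_0}(X),Y)G(X)=0$ for all $G \in \mbox{lin}(\mathscr F)$. I would also record that $F_{t_0}$ itself lies in $\mbox{lin}(\mathscr F)$, since $F_{t_0}=F_0+\sum_{k=1}^{t_0} w_k f_k$ with $F_0 \in \mathscr F$ and each $f_k \in \mathscr F$; in particular $\mathds E \xi(F_{t_0}(X),Y)F_{t_0}(X)=0$.

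The crux is the global subgradient inequality for $C$. Integrating the pointwise inequality (\ref{sub-zero}) with $x_1=G(X)$ and $x_2=F_{t_0}(X)$, $y=Y$, against $\mu_{X,Y}$ yields, for every $G \in L^2(\mu_X)$,
$$C(G)\geq C(F_{t_0})+\mathds E \xi(F_{t_0}(X),Y)\big(G(X)-F_{t_0}(X)\big).$$
Restricting to $G \in \mbox{lin}(\mathscr F)$, the two contributions $\mathds E \xi(F_{t_0}(X),Y)G(X)$ and $\mathds E \xi(F_{t_0}(X),Y)F_{t_0}(X)$ both vanish by the previous step, so the cross term disappears and $C(G)\geq C(F_{t_0})$ for all $G \in \mbox{lin}(\mathscr F)$. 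Taking the infimum over such $G$ and using $F_{t_0}\in \mbox{lin}(\mathscr F)$ then gives $C(F_{t_0})=\inf_{F\in \mbox{\footnotesize lin}(\mathscr F)}C(F)$, as claimed.

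The only delicate point I expect is integrability: I must ensure that $\mathds E \xi(F_{t_0}(X),Y)G(X)$ is finite before manipulating it, and that taking expectations in (\ref{sub-zero}) is legitimate. This is exactly what Assumption ${\bf A_1}$ guarantees, via the bound $|\xi(F_{t_0}(X),Y)|\leq |\partial_x^{-}\psi(F_{t_0}(X),Y)|+|\partial_x^{+}\psi(F_{t_0}(X),Y)|$ together with the Cauchy--Schwarz inequality (since $G \in L^2(\mu_X)$), precisely as in the local-boundedness computation following ${\bf A_1}$. Consequently no hypothesis beyond ${\bf A_1}$ is needed, and no substantive obstruction arises; an equivalent route, which sidesteps even the question of attainment of the infimum, would instead pick an $\varepsilon$-suboptimal $F^{\star}_{\varepsilon}\in \mbox{lin}(\mathscr F)$, apply the subgradient inequality to it, and let $\varepsilon\downarrow 0$.
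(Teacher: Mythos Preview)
Your argument is correct and follows essentially the same route as the paper: extend the orthogonality from $\mathscr F$ to $\mbox{lin}(\mathscr F)$ by linearity, then apply the integrated subgradient inequality (\ref{sub-zero}) to kill the cross term. The only cosmetic difference is that the paper passes through an $\varepsilon$-suboptimal $F_{\varepsilon}^{\star}$ whereas you apply the inequality directly to every $G\in\mbox{lin}(\mathscr F)$ and take the infimum---your version is in fact slightly cleaner, and you even flag the $\varepsilon$-route yourself as an equivalent alternative.
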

\begin{lem}
\label{technical-3}
Assume that Assumptions ${\bf A_1}$ and ${\bf A_2}$ are satisfied. Then, for all $F \in L^2(\mu_X)$,
$$\|F\|_{\mu_X} \leq \frac{2}{\alpha} \big(\mathds E \xi (0,Y)^2\big)^{1/2}+\sqrt{\frac{2C(F)}{\alpha}}.$$
\end{lem}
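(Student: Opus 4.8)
The plan is to exploit the $\alpha$-strong convexity inequality (\ref{sub-one}) with $x_1=F(x)$ and $x_2=0$, then integrate and solve a quadratic inequality in $\|F\|_{\mu_X}$. First, I would write, for every $x$ and $y$,
$$\psi(F(x),y)\geq \psi(0,y)+\xi(0,y)F(x)+\frac{\alpha}{2}F(x)^2.$$
Taking the expectation with respect to $\mu_{X,Y}$ and using that $\psi\geq 0$ (so $\mathds E\psi(0,Y)\geq 0$), this gives
$$C(F)\geq \mathds E\,\xi(0,Y)F(X)+\frac{\alpha}{2}\|F\|_{\mu_X}^2.$$
All the terms here are finite: Assumption ${\bf A_1}$ applied at the point $0\in L^2(\mu_X)$ ensures $\mathds E|\partial_x^{-}\psi(0,Y)|^2+\mathds E|\partial_x^{+}\psi(0,Y)|^2<\infty$, hence, by $|\xi(0,y)|\leq|\partial_x^{-}\psi(0,y)|+|\partial_x^{+}\psi(0,y)|$, we get $\mathds E\,\xi(0,Y)^2<\infty$.

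Next I would bound the cross term from below by the Cauchy--Schwarz inequality,
$$\big|\mathds E\,\xi(0,Y)F(X)\big|\leq \big(\mathds E\,\xi(0,Y)^2\big)^{1/2}\|F\|_{\mu_X},$$
so that, setting $a=\|F\|_{\mu_X}$, $b=\big(\mathds E\,\xi(0,Y)^2\big)^{1/2}$, and $c=C(F)$, the previous display yields the quadratic inequality
$$\frac{\alpha}{2}a^2-ba-c\leq 0.$$
Solving for $a\geq 0$ gives $a\leq \alpha^{-1}\big(b+\sqrt{b^2+2\alpha c}\,\big)$, and then using $\sqrt{b^2+2\alpha c}\leq b+\sqrt{2\alpha c}$ leads to
$$a\leq \frac{2b}{\alpha}+\frac{\sqrt{2\alpha c}}{\alpha}=\frac{2}{\alpha}\big(\mathds E\,\xi(0,Y)^2\big)^{1/2}+\sqrt{\frac{2C(F)}{\alpha}},$$
which is the claimed bound.

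There is essentially no obstacle here: the argument is a direct consequence of strong convexity plus Cauchy--Schwarz and an elementary quadratic estimate. The only point requiring a word of care is the finiteness of $\mathds E\,\xi(0,Y)^2$, which is why Assumption ${\bf A_1}$ (and not merely ${\bf A_2}$) is invoked; everything else is routine.
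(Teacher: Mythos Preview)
Your proof is correct and follows essentially the same route as the paper's: apply the strong-convexity inequality at the base point $0$, integrate, drop the nonnegative term $C(0)=\mathds E\,\psi(0,Y)$, use Cauchy--Schwarz on the cross term, and solve the resulting quadratic in $\|F\|_{\mu_X}$. The only addition you make is an explicit remark on why $\mathds E\,\xi(0,Y)^2<\infty$ via Assumption~${\bf A_1}$, which the paper leaves implicit.
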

\begin{proof}
By inequality (\ref{sub-one}) and the Cauchy-Schwarz inequality,
\begin{align*}
C(F) & \geq C(0)+\mathds E \xi(0,Y)F(X)+\frac{\alpha}{2}\|F\|_{\mu_X}^2\\
& \geq C(0)-\big(\mathds E \xi (0,Y)^2\big)^{1/2}\|F\|_{\mu_X}+\frac{\alpha}{2}\|F\|_{\mu_X}^2.
\end{align*}
Let $\kappa=(\mathds E \xi(0,Y)^2)^{1/2}$. Since $C(0)\geq 0$, 
$$C(F)+\kappa\|F\|_{\mu_X}-\frac{\alpha}{2}\|F\|_{\mu_X}^2\geq 0.$$
Therefore,
$$
\|F\|_{\mu_X} \leq \frac{\kappa+\sqrt{\kappa^2+2\alpha C(F)}}{\alpha}\leq \frac{2\kappa}{\alpha}+\sqrt{\frac{2C(F)}{\alpha}}.
$$
\end{proof}
\begin{lem}
\label{VCfinite}
Let the event $S$ be defined by
$$S=\big\{\forall j=1, \hdots, N: P_n(A_j^n)\geq P(A_j^n)/2\big\}.$$
If $\frac{\log N}{nv_n}\to 0$, then $\lim_{n \to \infty} \mathds P(S^c)=0$.
\end{lem}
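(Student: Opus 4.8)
The plan is to bound $\mathds P(S^c)$ by a union bound over the $N$ cells of the partition, controlling each term with a Chernoff-type estimate for the lower tail of a binomial. The key point is that, although the cells may have very small probability (of order $v_n$), the relative deviation we need to rule out is only a factor of $2$, so a multiplicative, variance-sensitive concentration bound suffices.

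First I would observe that, since $P$ has density $g$ with $\inf_{\mathscr X} g>0$ and $\lambda(A_j^n)\ge v_n$ for every $j$, one has $P(A_j^n)\ge (\inf_{\mathscr X} g)\,v_n=:p_n$ for all $1\le j\le N$. Next, for fixed $j$, the variable $nP_n(A_j^n)=\sum_{i=1}^n \mathds 1_{[X_i\in A_j^n]}$ is binomial with parameters $n$ and $P(A_j^n)$, so the multiplicative Chernoff bound for the lower tail (or, equally well, Bernstein's inequality) yields
$$\mathds P\big(P_n(A_j^n)<P(A_j^n)/2\big)\le \exp\!\big(-nP(A_j^n)/8\big)\le \exp(-np_n/8).$$
A union bound then gives
$$\mathds P(S^c)\le \sum_{j=1}^N \mathds P\big(P_n(A_j^n)<P(A_j^n)/2\big)\le N\exp(-np_n/8)=\exp\!\Big(\log N-\tfrac{\inf_{\mathscr X} g}{8}\,nv_n\Big).$$

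It then remains to check that the exponent tends to $-\infty$. Writing it as $nv_n\big(\tfrac{\log N}{nv_n}-\tfrac{\inf_{\mathscr X} g}{8}\big)$, I would first note that the hypotheses $\tfrac{\log N}{nv_n}\to 0$ and $N\to\infty$ force $nv_n\to\infty$ (indeed $nv_n=\log N\cdot\tfrac{nv_n}{\log N}\to\infty$); since moreover $\tfrac{\log N}{nv_n}\to 0<\tfrac{\inf_{\mathscr X} g}{8}$, the bracketed factor is eventually bounded above by a strictly negative constant, so the product tends to $-\infty$ and $\mathds P(S^c)\to 0$. I do not expect a genuine obstacle here; the only point needing a little care is that one must use a variance-sensitive inequality (Bernstein or the multiplicative Chernoff bound) rather than Hoeffding's, since a Hoeffding bound would only give $\exp(-cnv_n^2)$ and hence would require the strictly stronger condition $\log N/(nv_n^2)\to 0$.
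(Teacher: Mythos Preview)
Your proof is correct and follows the same skeleton as the paper's: a union bound over the $N$ cells combined with a variance-sensitive concentration inequality for relative deviations, yielding $\mathds P(S^c)\le c_1 N\exp(-c_2\,nv_n\inf_{\mathscr X}g)$, which tends to $0$ under $\log N/(nv_n)\to 0$ (and $N\to\infty$, assumed in the section).

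The only difference is in the concentration step: the paper cites a Vapnik--Chervonenkis inequality for relative deviations, whereas you argue directly via the multiplicative Chernoff bound for a single binomial. Your route is arguably more elementary and better tailored to the situation, since the family $\{A_j^n\}$ is finite and fixed, so no VC-type uniformity is actually needed; the union bound over $N$ sets plus the one-set Chernoff estimate already gives the right exponent. Your explicit check that the exponent diverges (writing it as $nv_n\big(\tfrac{\log N}{nv_n}-\tfrac{\inf_{\mathscr X} g}{8}\big)$ and noting $nv_n\to\infty$) and your remark that Hoeffding alone would only yield $\exp(-cnv_n^2)$ are both on point and make the argument more self-contained than the paper's.
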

\begin{proof}
We have
\begin{align*}
\mathds P(S^c)&=\mathds P\big(\exists j \leq N: P_n(A_j^n)< P(A_j^n)/2\big)\\
&=\mathds P\big(\exists j \leq N: P_n(A_j^n)-P(A_j^n)< -P(A_j^n)/2\big)\\
&=\mathds P\Big(\exists j \leq N: \frac{P(A_j^n)-P_n(A_j^n)}{\sqrt{P(A_j^n)}}> \sqrt{P(A_j^n)}/2\Big)\\
& \leq \mathds P\Big(\max_{1\leq j \leq N} \frac{P(A_j^n)-P_n(A_j^n)}{\sqrt{P(A_j^n)}}> \sqrt{v_n\inf_{\mathscr X}g}/2\Big)\\
& \leq c_1N e^{-nv_n\inf_{\mathscr X} g/c_2},
\end{align*}
where $c_1$ and $c_2$ are positive constants. In the last inequality, we used a Vapnik-Cher\-vo\-nen\-kis inequality \citep{Va88} for relative deviations.
\end{proof}
In the sequel, we let 
$$T=\Big\{ \beta \in \mathds R^N:\sum_{j=1}^N\beta_j^2\leq \frac{2\bar \phi}{\inf_{\mathscr X}g}\cdot \frac{1}{v_n\gamma_n}\Big\},$$
where $\bar \phi=\sup_{\mathscr Y}\phi(0,y)<\infty$. We recall that $A^n(x):=A_j^n$ whenever $x\in A_j^n$.
\begin{lem}
\label{approxlemma}
Assume that $\emph{diam}(A^n(X))\to 0$ in probability and that $\gamma_n \to 0$ as $n\to \infty$. For all $\varepsilon >0$ and all $n$ large enough, there exists $(\beta_1^{\varepsilon}, \hdots, \beta_N^{\varepsilon})\in T$ such that
$$\big\| F^{\star}-\sum_{j=1}^N \beta_j^{\varepsilon}\mathds 1_{A_j^n}\big\|_{P}\leq \varepsilon.$$
\end{lem}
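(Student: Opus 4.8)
The plan is to take, for each $j$ with $P(A_j^n)>0$, the cell-wise $P$-average of $F^{\star}$,
$$\beta_j^{\varepsilon}=\frac{1}{P(A_j^n)}\int_{A_j^n}F^{\star}\,{\rm d}P,$$
and $\beta_j^{\varepsilon}=0$ on the (possibly) $P$-null cells, so that $F_n^{\star}:=\sum_{j=1}^N\beta_j^{\varepsilon}\mathds 1_{A_j^n}$ is exactly the $L^2(P)$-orthogonal projection of $F^{\star}$ onto the subspace of functions that are constant on each $A_j^n$, i.e.\ $F_n^{\star}=\mathds E\big(F^{\star}(X)\,\big|\,A^n(X)\big)$. (This choice does not depend on $\varepsilon$; only the required size of $n$ will.) Writing $M=\sup_{\mathscr X}|F^{\star}|<\infty$ — recall $F^{\star}$ is bounded — convexity of $|\cdot|$ gives $|\beta_j^{\varepsilon}|\leq M$ for all $j$. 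Two things then remain to be checked: that $(\beta_1^{\varepsilon},\dots,\beta_N^{\varepsilon})\in T$ for $n$ large, and that $\|F^{\star}-F_n^{\star}\|_P\to0$.

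For membership in $T$, note that the $A_j^n$ partition the bounded set $\mathscr X$ and $\lambda(A_j^n)\geq v_n$, so $Nv_n\leq\lambda(\mathscr X)$, i.e.\ $N\leq\lambda(\mathscr X)/v_n$; hence
$$\sum_{j=1}^N(\beta_j^{\varepsilon})^2\leq NM^2\leq\frac{M^2\lambda(\mathscr X)}{v_n}.$$
Since $\gamma_n\to0$, for all $n$ large enough one has $M^2\lambda(\mathscr X)\leq\frac{2\bar\phi}{\gamma_n\inf_{\mathscr X}g}$ (assuming $\bar\phi>0$, the remaining case being degenerate), and therefore $\sum_j(\beta_j^{\varepsilon})^2\leq\frac{2\bar\phi}{v_n\gamma_n\inf_{\mathscr X}g}$, that is $(\beta_1^{\varepsilon},\dots,\beta_N^{\varepsilon})\in T$.

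For the $L^2(P)$ approximation I would argue in three steps. First, continuous functions are dense in $L^2(P)$ (as $P$ is a finite Borel measure on the compact metric space $\mathscr X$), so pick a continuous $h$ on $\mathscr X$ with $\|F^{\star}-h\|_P\leq\varepsilon/3$; truncating $h$ at level $M$ only decreases its $L^2(P)$-distance to $F^{\star}$, so we may assume $\sup_{\mathscr X}|h|\leq M$. Second, letting $h_n:=\mathds E(h(X)\,|\,A^n(X))$ (the analogous cell-wise average of $h$) and using that conditional expectation is an $L^2(P)$-contraction, $\|F_n^{\star}-h_n\|_P=\|\mathds E(F^{\star}(X)-h(X)\,|\,A^n(X))\|_P\leq\|F^{\star}-h\|_P\leq\varepsilon/3$. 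Third, with $\omega(\delta)=\sup_{\|x-x'\|\leq\delta}|h(x)-h(x')|$ the modulus of continuity of $h$ (finite, and $\omega(\delta)\downarrow0$ as $\delta\downarrow0$ by uniform continuity on $\mathscr X$), one has $|h(x)-h_n(x)|\leq\omega(\mbox{diam}(A_j^n))$ for $x\in A_j^n$, hence
$$\|h-h_n\|_P^2\leq\mathds E\big[\omega(\mbox{diam}(A^n(X)))^2\big].$$
The random variable $\omega(\mbox{diam}(A^n(X)))$ is bounded by $2M$ and tends to $0$ in probability (since $\mbox{diam}(A^n(X))\to0$ in probability and $\omega$ is continuous at $0$ with $\omega(0)=0$), so bounded convergence gives $\mathds E[\omega(\mbox{diam}(A^n(X)))^2]\to0$, whence $\|h-h_n\|_P\leq\varepsilon/3$ for $n$ large. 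Combining the three estimates by the triangle inequality yields $\|F^{\star}-F_n^{\star}\|_P\leq\varepsilon$ for all $n$ large enough, which is the claim.

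The only genuinely delicate point is the $L^2$ approximation: the partitions $(A_j^n)_j$ are not assumed to be nested, so there is no martingale (Lévy) convergence theorem to invoke directly on $F^{\star}$, and the hypothesis on the cells is merely $\mbox{diam}(A^n(X))\to0$ in probability rather than almost surely or deterministically. Routing the estimate through a continuous approximant $h$, whose cell-wise averaging error is dominated uniformly by $\omega(\mbox{diam}(A^n(X)))$, is precisely what turns the in-probability hypothesis into the desired $L^2$ control. By contrast, verifying $\beta^{\varepsilon}\in T$ is a soft counting estimate: it uses only $Nv_n\leq\lambda(\mathscr X)$ together with $\gamma_n\to0$, which inflates the radius of $T$ fast enough to absorb the uniformly bounded coefficients $\beta_j^{\varepsilon}$.
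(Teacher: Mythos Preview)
Your proof is correct and follows a genuinely different route from the paper's. The paper approximates $F^{\star}$ by a mollified version $F_p^{\star}=F^{\star}\star K_p$ (extended by zero outside $\mathscr X$), uses $\sup_{\mathscr X}g<\infty$ to pass from $\|F_p^{\star}-F^{\star}\|_{\lambda}$ to $\|F_p^{\star}-F^{\star}\|_{P}$, and then takes $\beta_j^{\varepsilon}=F_p^{\star}(a_j^n)$ for arbitrary points $a_j^n\in A_j^n$; the control of $\|G_p^{\star}-F_p^{\star}\|_P$ is obtained by splitting over the event $\{\mbox{diam}(A^n(X))\leq\eta\}$ and its complement. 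You instead take $\beta_j^{\varepsilon}$ to be the $P$-average of $F^{\star}$ on $A_j^n$, i.e.\ $F_n^{\star}=\mathds E(F^{\star}(X)\,|\,A^n(X))$, and route the $L^2(P)$ approximation through a continuous $h$ obtained directly from density of $C(\mathscr X)$ in $L^2(P)$, exploiting that conditional expectation is an $L^2$-contraction to transfer the error from $F^{\star}$ to $F_n^{\star}$. Your approach is a bit more elementary: it avoids the convolution machinery and does not use the upper bound $\sup_{\mathscr X}g<\infty$ in the approximation step (only $\inf_{\mathscr X}g>0$ enters, via the definition of $T$, and in fact guarantees that every cell has positive $P$-measure so no special handling of null cells is needed). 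The paper's choice of point values is perhaps more explicit, but both arguments hinge on the same idea---replace $F^{\star}$ by a uniformly continuous surrogate so that the in-probability shrinking of the cells yields $L^2$ control via bounded convergence. The verification that $\beta^{\varepsilon}\in T$ is identical in both proofs.
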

\begin{proof}
Let $K$ be a  bounded and uniformly continuous function on $\mathds R^d$, with $\int K \rm{d}\lambda =1$. Let, for $p>0$, 
$$K_p(x)=p^dK\Big(\frac{x}{p}\Big), \quad x \in \mathds R^d.$$
With a slight abuse of notation, we consider $F^{\star}$ as a function defined on the whole space $\mathds R^d$ (instead of $\mathscr X$) by  implicitly assuming that $F^{\star}=0$ on $\mathscr X^c$. We also define $ F_p^{\star}=F^{\star} \star K_p$, i.e.,
$$F^{\star}_p(x)=\int_{\mathds R^d} K_p(z)F^{\star}(x-z){\rm d}z, \quad x\in \mathds R^d.$$
Let $(L^2(\lambda), \|\cdot\|_{\lambda})$ be the vector space of all real-valued square integrable functions on $\mathds R^d$. For all $p$ large enough, we have
$$\|F_p^{\star}-F^{\star}\|_{\lambda}\leq \frac{\varepsilon}{2\sqrt{\sup_{\mathscr X}g}}$$
\citep[see, e.g.,][Theorem 9.6]{WhZy77}. Therefore, for all $p$ large enough,
\begin{equation}
\label{MCM1}
\|F_p^{\star}-F^{\star}\|_{P}\leq \varepsilon/2.
\end{equation}
In addition, $F^{\star}_p$ is uniformly continuous on $\mathscr X$ \citep[][Theorem 9.4]{WhZy77}. Thus, there exists $\eta=\eta(\varepsilon,p)>0$ such that, for all $(x,x')\in \mathscr X^2$ with $\|x-x'\|\leq \eta$,
$$|F^{\star}_p(x)-F^{\star}_p(x')| \leq \varepsilon/\sqrt{8}.$$
For each $j\in \{1, \hdots, N\}$, choose an arbitrary $a_j^n \in A_j^n$ and set $G_p^{\star}=\sum_{j=1}^NF^{\star}_p(a_j^n)\mathds 1_{A_j^n}$.
Then
\begin{align*}
\|G^{\star}_p-F^{\star}_p\|^2_{P}&=\sum_{j=1}^N\mathds E(G_p^{\star}(X)-F^{\star}_p(X))^2\mathds 1_{[X\in A_j^n, \mbox{\footnotesize diam}(A^n(X))\leq \eta]}\\
&\quad +\sum_{j=1}^N\mathds E(G_p^{\star}(X)-F^{\star}_p(X))^2\mathds 1_{[X\in A_j^n, \mbox{\footnotesize diam}(A^n(X))> \eta]}\\
&=\sum_{j=1}^N\mathds E(F^{\star}_p(a_j^n)-F_p^{\star}(X))^2\mathds 1_{[X\in A_j^n, \mbox{\footnotesize diam}(A^n(X))\leq \eta]}\\
&\quad +\sum_{j=1}^N\mathds E(G_p^{\star}(X)-F^{\star}_p(X))^2\mathds 1_{[X\in A_j^n, \mbox{\footnotesize diam}(A^n(X))> \eta]}\\
& \leq \frac{\varepsilon^2}{8}\sum_{j=1}^N \mathds P(X\in A_j^n)+4\sup_{\mathscr X}(F^{\star})^2\sum_{j=1}^N\mathds P(X\in A_j^n, \mbox{diam}(A^n(X))> \eta)\\
& \quad \mbox{(since $\sup_{\mathscr X}|F^{\star}_p| \leq \sup_{\mathscr X}|F^{\star}|<\infty$ and $\sup_{\mathscr X}|G^{\star}_p| \leq \sup_{\mathscr X}|G^{\star}|<\infty$)}\\ 
&\leq \frac{\varepsilon^2}{8}+4\sup_{\mathscr X}(F^{\star})^2\mathds P(\mbox{diam}(A^n(X))> \eta),
\end{align*}
because the $(A_j^n)_{1\leq j \leq N}$ form a partition of $\mathscr X$. Since $\mbox{diam}(A^n(X))\to 0$ in probability, we see that for all $n$ large enough (depending upon $\varepsilon$ and $p$),
$$\|G^{\star}_p-F^{\star}_p\|_{P}\leq \varepsilon/2.$$
Letting $\beta_j^{\varepsilon}=F^{\star}_p(a_j^n)$, $1\leq j\leq N$, and combining this inequality and (\ref{MCM1}), we conclude that for every fixed $\varepsilon >0$ and all $n$ large enough, there exists $(\beta_1^{\varepsilon}, \hdots, \beta_N^{\varepsilon}) \in \mathds R^N$ such that
$$\big\|F^{\star}-\sum_{j=1}^N\beta_j^{\varepsilon}\mathds 1_{A_j^n}\big\|_{P}\leq \varepsilon.$$
To complete the proof, it remains to show that $(\beta_1^{\varepsilon}, \hdots, \beta_N^{\varepsilon}) \in T$. Observe that
$$\sum_{j=1}^N (\beta_j^{\varepsilon})^2\leq \sup_{\mathscr X}(F^{\star})^2N.$$
The right-hand side is bounded by $\frac{2\bar \phi}{\inf_{\mathscr X}g }\cdot \frac{1}{v_n\gamma_n}$ for all $n$ large enough. To see this, just note that 
$$Nv_n \leq \sum_{j=1}^N\lambda(A_j^n)=\lambda(\mathscr X)<\infty.$$
Therefore,
$Nv_n\gamma_n \leq \lambda(\mathscr X)\gamma_n \to 0$ as $n \to \infty$. This concludes the proof of the lemma. 
\end{proof}
\begin{lem}
\label{estimationlemma}
For $\beta \in \mathds R^N$, let $F_{\beta}=\sum_{j=1}^N \beta_j\mathds 1_{A_j^n}$. Assume that Assumption ${\bf A_4}$ is satisfied. If
$$
\frac{1}{\sqrt{nv_n\gamma_n}}\zeta \bigg(\sqrt{\frac{2\bar \phi}{v_n\gamma_n\inf_{\mathscr X} g}}\bigg) \to 0,
$$
then
$$\lim_{n \to \infty}\mathds E \sup_{\beta \in T} | A_n(F_{\beta})-A(F_{\beta})|=0.$$
\end{lem}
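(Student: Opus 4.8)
The plan is to establish a uniform law of large numbers over the class $\{F_{\beta}:\beta\in T\}$ by symmetrization together with the Ledoux--Talagrand contraction principle, the crucial point being to exploit the piecewise-constant form of the $F_\beta$ so that the resulting bound does \emph{not} blow up with the (growing) dimension $N$. First I would record the basic range bound: since $A_1^n,\dots,A_N^n$ are pairwise disjoint, each $F_\beta$ is constant on every cell, so $\sup_{x\in\mathscr X}|F_\beta(x)|=\max_{1\le j\le N}|\beta_j|\le(\sum_{j}\beta_j^2)^{1/2}$, and hence $\sup_x|F_\beta(x)|\le R_n$ for every $\beta\in T$, where $R_n:=\sqrt{2\bar\phi/(v_n\gamma_n\inf_{\mathscr X}g)}$. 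This is precisely where Assumption ${\bf A_4}$ enters: applied with $p=R_n$, it says that for each $y\in\mathscr Y$ the map $u\mapsto\phi(u,y)-\phi(0,y)$ is $\zeta(R_n)$-Lipschitz on $[-R_n,R_n]$ and vanishes at $0$, and in particular $|\phi(F_\beta(x),y)-\phi(0,y)|\le\zeta(R_n)|F_\beta(x)|$ for all $\beta\in T$.

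Next I would write $A_n(F_\beta)-A(F_\beta)=E_n(\beta)+(A_n(0)-A(0))$, where $E_n(\beta)$ is the centred empirical mean of $\phi(F_\beta(X_i),Y_i)-\phi(0,Y_i)$. The term $A_n(0)-A(0)$ is free of $\beta$ and bounded by $\bar\phi$ in absolute value, so $\mathds E|A_n(0)-A(0)|\le\bar\phi/\sqrt n\to0$. For the remaining term, the classical symmetrization inequality followed by the contraction principle (legitimate because $F_\beta(X_i)\in[-R_n,R_n]$ and $u\mapsto\phi(u,Y_i)-\phi(0,Y_i)$ is $\zeta(R_n)$-Lipschitz there with value $0$ at $0$) gives, for a universal constant $c>0$,
$$\mathds E\sup_{\beta\in T}|E_n(\beta)|\le c\,\zeta(R_n)\,\mathds E\sup_{\beta\in T}\Big|\frac1n\sum_{i=1}^n\sigma_iF_\beta(X_i)\Big|,$$
with $\sigma_1,\dots,\sigma_n$ i.i.d.\ Rademacher signs independent of the sample. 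Writing $F_\beta(X_i)=\beta_j$ whenever $X_i\in A_j^n$, one has $\frac1n\sum_i\sigma_iF_\beta(X_i)=\sum_{j=1}^N\beta_jZ_j$ with $Z_j:=\frac1n\sum_{i:X_i\in A_j^n}\sigma_i$, so by Cauchy--Schwarz and $\sum_j\beta_j^2\le R_n^2$,
$$\sup_{\beta\in T}\Big|\sum_{j=1}^N\beta_jZ_j\Big|\le R_n\Big(\sum_{j=1}^NZ_j^2\Big)^{1/2}.$$
Conditioning on $X_1,\dots,X_n$ yields $\mathds E[Z_j^2\mid X_1,\dots,X_n]=P_n(A_j^n)/n$, whence $\sum_j\mathds EZ_j^2=\frac1n\sum_jP(A_j^n)=1/n$ because the $A_j^n$ partition $\mathscr X$, and therefore $\mathds E(\sum_jZ_j^2)^{1/2}\le1/\sqrt n$.

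Collecting the pieces gives
$$\mathds E\sup_{\beta\in T}|A_n(F_\beta)-A(F_\beta)|\le c\,\frac{\zeta(R_n)R_n}{\sqrt n}+\frac{\bar\phi}{\sqrt n},$$
and since $R_n/\sqrt n=\sqrt{2\bar\phi/\inf_{\mathscr X}g}\cdot(nv_n\gamma_n)^{-1/2}$, the first term is a fixed multiple of $\frac1{\sqrt{nv_n\gamma_n}}\zeta\big(\sqrt{2\bar\phi/(v_n\gamma_n\inf_{\mathscr X}g)}\big)$, which tends to $0$ by hypothesis, while the second term does as well; this proves the lemma. The step I expect to be the main obstacle is the uniform-in-$\beta$ control: bounding the supremum crudely would cost a factor $\sqrt N\to\infty$, and it is only the combination of the contraction principle with the Cauchy--Schwarz and conditional-variance computation above—made possible by the fact that each $F_\beta$ is constant on the cells $A_j^n$—that keeps the estimate at the benign scale $\zeta(R_n)R_n/\sqrt n$ matching exactly the assumed rate condition.
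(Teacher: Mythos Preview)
Your proof is correct and reaches exactly the same final rate $\zeta(R_n)R_n/\sqrt{n}$ as the paper, but by a genuinely different route. The paper argues that the process $\beta\mapsto (A_n(F_\beta)-A(F_\beta))/\zeta(s_n)$ is subgaussian for the metric $d(\beta_1,\beta_2)=n^{-1/2}\|\beta_1-\beta_2\|_\infty$, applies Dudley's entropy integral, and then controls the covering numbers of $T=s_nB_2(0,1)$ in $\|\cdot\|_\infty$ by passing to the Euclidean norm; the $\sqrt{N}$ factors arising from the norm comparison and from $\log N(B_2(0,1),\|\cdot\|_2,\varepsilon)\le N\log(3/\varepsilon)$ cancel, leaving a dimension-free bound. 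You instead use symmetrization and the Ledoux--Talagrand contraction principle (after centring at $\phi(0,\cdot)$ so the contractions vanish at the origin), then compute the Rademacher complexity of $\{F_\beta:\beta\in T\}$ directly: writing $\frac{1}{n}\sum_i\sigma_iF_\beta(X_i)=\sum_j\beta_jZ_j$, Cauchy--Schwarz and the exact identity $\sum_j\mathds E Z_j^2=1/n$ (coming from the partition property) give the bound $R_n/\sqrt{n}$ without any chaining or covering numbers. Your argument is more elementary and makes the cancellation of $N$ transparent; the paper's chaining argument is in principle more flexible (it would adapt to richer index sets), but here both yield the same constant-free conclusion.
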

\begin{proof}
Let 
$$s_n=\sqrt{\frac{2\bar \phi}{v_n\gamma_n\inf_{\mathscr X} g}},$$
and let $\|\beta\|_{\infty}=\max_{1\leq j \leq N}|\beta_j|$ be the supremum norm of $\beta=(\beta_1, \hdots,\beta_N) \in \mathds R^N$. By definition of $T$, we have, for all $\beta \in T$,
$$\sup_{\mathscr X}|F_{\beta}|=\sup_{\mathscr X} \big| \sum_{j=1}^N\beta_j \mathds 1_{A_j^n}\big|\leq \|\beta\|_{\infty}\leq s_n.$$
In addition, according to Assumption ${\bf A_4}$, we may write, for $\beta_1$ and $\beta_2\in T$,
$$|\phi(F_{\beta_1}(x),y)-\phi(F_{\beta_2}(x),y)|\leq \zeta(s_n)|F_{\beta_1}(x)-F_{\beta_2}(x)|\leq \zeta(s_n)\|\beta_1-\beta_2\|_{\infty}.$$
This shows that the process 
$$\Big(\frac{A_n(F_{\beta})-A(F_{\beta})}{\zeta(s_n)}\Big)_{\beta \in T}$$ 
is subgaussian \citep[e.g.,][Chapter 5]{Ha16} for the distance $d(\beta_1,\beta_2)=\frac{1}{\sqrt n}\|\beta_1-\beta_2\|_{\infty}$. 
Now, let $N(T,d,\varepsilon)$ denote the $\varepsilon$-covering number of $T$ for the distance $d$. Then, by Dudley's inequality \citep[][Corollary 5.25]{Ha16}, one has
\begin{align*}
\mathds E \sup_{\beta \in T} ( A_n(F_{\beta})-A(F_{\beta})) & \leq 12\zeta(s_n) \int_0^{\infty}\sqrt{\log \Big(N(T, \frac{1}{\sqrt n}\|\cdot\|_{\infty},\varepsilon)\Big)}{\rm d}\varepsilon\\
&=12 \zeta(s_n)\cdot \frac{1}{\sqrt n}\int_0^{\infty}\sqrt{\log (N(T, \|\cdot\|_{\infty},\varepsilon))}{\rm d}\varepsilon.
\end{align*}
Let $B_2(0,1)$ denote the unit Euclidean ball in $(\mathds R^N,\|\cdot\|_2)$. Since $T=s_nB_2(0,1)$, we see that
$$\mathds E \sup_{\beta \in T} ( A_n(F_{\beta})-A(F_{\beta}))\leq 12 \zeta(s_n)\cdot \frac{s_n}{\sqrt n}\int_0^{\infty}\sqrt{\log (B_2(0,1), \|\cdot\|_{\infty},\varepsilon)}{\rm d}\varepsilon.$$
But $\|\cdot \|_2\leq \sqrt N \|\cdot\|_{\infty}$, and so
\begin{align*}
\mathds E \sup_{\beta \in T} ( A_n(F_{\beta})-A(F_{\beta}))&\leq 12 \zeta(s_n)\cdot \frac{s_n}{\sqrt n}\int_0^{\infty}\sqrt{\log \Big(B_2(0,1), \frac{1}{\sqrt N}\|\cdot\|_{2},\varepsilon \Big)}{\rm d}\varepsilon\\
&=12 \zeta(s_n)\cdot \frac{s_n}{\sqrt n}\cdot\frac{1}{\sqrt N}\int_0^{\infty}\sqrt{\log(3/\varepsilon)^N}{\rm d}\varepsilon\\
&= 12 \,\frac{s_n\zeta(s_n)}{\sqrt n}\int_0^{\infty}\sqrt{\log(3/\varepsilon)}{\rm d}\varepsilon.
\end{align*}
In the last equality, we used the fact that $N(B_2(0,1),\|\cdot\|_2,\varepsilon)$ equals 1 for $\varepsilon \geq 1$ and is not larger than $(3/\varepsilon)^N$ for $\varepsilon<1$ \citep[e.g.,][Chapter 5]{Ha16}. The same conclusion holds for $\mathds E \sup_{\beta \in T} ( A(F_{\beta})-A_n(F_{\beta}))$, and this proves the result.
\end{proof}

\bibliography{biblio-bc}

\end{document}